\newtheorem{theorem}{Theorem}[section]
\newtheorem{proposition}[theorem]{Proposition}
\newtheorem{lemma}[theorem]{Lemma}
\numberwithin{equation}{section}
\newcommand{\LI}{L^\infty(\mathbb{R}^2)}
\newcommand{\RN}{\mathbb{R}^2}
\newcommand{\e}{\varepsilon}
\numberwithin{equation}{section}
\begin{document}
\title{Non-topological solutions in a
generalized  Chern-Simons model on torus}
\author{Youngae Lee}
\address[Youngae Lee]{Center for Advanced Study in Theoretical Science, National Taiwan University, No.1, Sec. 4, Roosevelt Road, Taipei 106, Taiwan}
\email{youngaelee0531@gmail.com}

\begin{abstract}
We consider a   quasi-linear elliptic equation with Dirac source terms arising in a
generalized self-dual Chern-Simons-Higgs gauge theory.
In this paper, we study doubly periodic vortices with arbitrary vortex configuration.
First of all,  we show that under doubly periodic condition, there are only two types of solutions, topological and non-topological
solutions as   the coupling parameter goes to zero.
Moreover, we succeed to construct  non-topological solution with $k$ bubbles where $k\in\mathbb{N}$ is any given number.
We believe that it is the first  result for the existence of non-topological doubly periodic solution of the   quasi-linear elliptic equation arising in a generalized self-dual Chern-Simons-Higgs gauge theory.
To find a suitable approximate solution, it is important to understand  the structure of  quasi-linear elliptic equation.
\end{abstract}

\date{\today}
\keywords{generalized self-dual
Chern.Simons model; doubly periodic vortices; bubbling non-topological solution}

\maketitle

\section{Introduction}
In this paper, we study  a  generalized self-dual Chern-Simons-Higgs gauge theory introduced  by  Burzlaff,    Chakrabarti,   Tchrakian  in \cite{BCT}. The Lagrangian density of the model in $(2+1)$ dimensions is
\[\mathcal{L}=\sqrt{2}\e\epsilon^{\mu\nu\alpha}\Big[A_\alpha-2i\Big(1-\frac{|\phi|^2}{2}\Big)\phi\overline{D_\mu\phi}\Big]F_{\mu\nu}+2(1-|\phi|^2)^2|D_{\mu}\phi|^2-V,\]
where $A=(A_0,A_1,A_2)$ is a 3-vector  gauge field, $F_{\alpha\beta}=\frac{\partial}{\partial x_\alpha} A_\beta-\frac{\partial}{\partial x_\beta}A_\alpha$ is the corresponding curvature, $\phi=\phi_1+i\phi_2$ is a complex scalar field called the Higgs field, $D_j=\frac{\partial}{\partial x_{j}}-iA_{j}$, $j=0,1,2$ is the gauge covariant derivative associated with $A$, $\alpha, \beta, \mu, \nu=0, 1, 2,$ $\e>0$ is a constant referred to as the Chern-Simons coupling parameter, $\epsilon^{\alpha\beta\gamma}$ is the Levi-Civita totally skew-symmetric tensor with $\epsilon^{012}=1$, $V$ is the Higgs potential function.
The corresponding Bogomol'nyi equations for unknowns $\phi$, $A$ defined on $\RN$ are \begin{equation*}
\begin{aligned}
\left\{
\begin{array}{ll}
& D_1\phi=iD_2\phi,
\\& (1-|\phi|^2)F_{12}=i(D_1\phi\overline{D_2\phi}-\overline{D_1\phi}D_2\phi)+\frac{1}{2\e^2}|\phi|^2(1-|\phi|^2)^2.
 \end{array}\right.\end{aligned}
\end{equation*}
In view of  Jaffe-Taubes' argument in \cite{JT}, we introduce unknown $v$ defined by
\[\phi(z)=\exp\Big(\frac{v(x)}{2}+i\sum_{j=1}^N\mbox{arg}(z-p_j)\Big), \ z=x_1+ix_2\in\mathbb{C},\]where $\{p_j\}_{j=1}^N$ are the zeros of $\phi(z)$, allowing their multiplicities.
Then we obtain the following reduced equation:
\begin{equation}
(1-e^{v})\Delta v-e^{v(x)}|\nabla v|^2 +\frac{1}{\e^{2}}e^{v(x)}\left(  1-e^{v(x)}\right)^2  =4\pi%
{\displaystyle \sum \limits_{j=1}^{N}}
\delta_{p_j}.\label{01}
\end{equation}
Here $p_j$ is called a vortex point.   The equation \eqref{01} can be considered in $\RN$ or a two dimensional
flat torus $\Omega$  due to the theory suggested by 't Hooft in \cite{'tH}.

We fix $\e>0$ for a while. In $\RN$,  a solution $v(x)$ is called a topological solution if $\lim_{|x|\to+\infty}v(x)=0$, and is called a non-topological solution if   $\lim_{|x|\to+\infty}v(x)=-\infty$.
Yang in \cite{Yang} found topological multi-vortex solutions of \eqref{01} by using the variational structure of the elliptic problem to produce an iteration scheme that
yields the desired solution. After then, Chae and Imanuvilov in \cite{CI} constructed a  non-topological multi-vortex solution $v(x)$  of \eqref{01} satisfying $v(x)=-(2N+4+\sigma)\ln|x|+O(1)$ as $|x|\to+\infty$ for some $\sigma>0$. To obtain the non-topological solution of \eqref{01}, the authors in \cite{CI} observed that \eqref{01} is a perturbation of the Liouville equation
 and applied    the arguments developed in \cite{CI0}. In \cite{CI0}, Chae and Imanuvilov showed the existence of non-topological multi-vortex solutions
  of   the relativistic Chern-Simons-Higgs  model (see \eqref{sceq} below), using the   implicit function theorem argument with Lyapunov-Schmidt reduction method.

Now we consider the equation \eqref{01} on flat two torus $\Omega$, where $\e$ goes to $0$. Since $(1-e^{v})\Delta v-e^{v(x)}|\nabla v|^2 =\mbox{div}((1-e^{v})\nabla v)$, any solution $v(x)$ to \eqref{01} satisfies \begin{equation}\label{uniforml1}\int_{\Omega}\frac{1}{\e^2}e^{v(x)}(1-e^{v(x)})^2dx=4\pi N.\end{equation}
Moreover, from the maximum principle (see also \cite[Lemma 3.1]{H}), we note that any solution $v(x)$ to \eqref{01} satisfies
\begin{equation}\label{negative}v(x)\le0\ \  \textrm{on}\ \  \Omega.\end{equation}
For the well known Chern-Simons-Higgs equation with $\e\to0$ (see \eqref{sceq} below), the corresponding properties \eqref{uniforml1} and \eqref{negative} were important to classify the
solutions according to their asymptotic behavior as $\e\to0$ (see \cite{DJLPW,CK}).
So it is natural that we expect that the solutions to \eqref{01} can also be classified according to the asymptotic behavior.
Now we have the following theorem:
\begin{theorem}\label{Lp}
For any given vortex configuration $\{p_j\}$,  let $v_{\e}$ be a sequence of solutions of (\ref{01}).  Then, up to  subsequence,
one of the following holds true:

(i)   $v_{\e}\to0$ a.e. as $\varepsilon\to0$. Moreover, $v_{\e}\to0$  in $L^p(\Omega)$ for any $p>1$ (topological type);

(ii)  $v_{\e}\to-\infty$ a.e. as $\varepsilon\to0$ (non-topological type).
\end{theorem}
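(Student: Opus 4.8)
The plan is to establish a dichotomy based on the behavior of $\sup_\Omega v_\e$ as $\varepsilon \to 0$. Since $v_\e \le 0$ on $\Omega$ by \eqref{negative}, set $m_\e := \sup_\Omega v_\e \le 0$. Up to a subsequence, either $m_\e \to 0$ or $m_\e \le -2\delta < 0$ for some fixed $\delta > 0$. I would show the first alternative forces case (i) and the second forces case (ii). The key integral constraint is \eqref{uniforml1}: $\frac{1}{\e^2}\int_\Omega e^{v_\e}(1-e^{v_\e})^2 dx = 4\pi N$, which is a uniform $L^1$-type bound on the nonlinearity that drives all the compactness.

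First, suppose $m_\e \le -2\delta < 0$, so $e^{v_\e} \le e^{-2\delta} < 1$ uniformly; then $(1-e^{v_\e})^2 \ge (1 - e^{-2\delta})^2 =: c_\delta > 0$. From \eqref{uniforml1}, $\frac{1}{\e^2}\int_\Omega e^{v_\e} dx \le \frac{4\pi N}{c_\delta}$, so $\int_\Omega e^{v_\e} dx \le C\e^2 \to 0$. Hence $e^{v_\e} \to 0$ in $L^1(\Omega)$, so along a further subsequence $v_\e \to -\infty$ a.e., which is case (ii). The point here is that a strictly negative cap on $v_\e$ is incompatible with the mass $4\pi N$ unless $v_\e$ collapses pointwise almost everywhere. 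I would also want to note the Jensen-type observation that $v_\e \le 0$ plus the equation prevents an intermediate scenario where $m_\e \to 0$ but $v_\e \not\to 0$ a.e.

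Second, suppose $m_\e \to 0$. I would rewrite \eqref{01} as $\mathrm{div}\big((1-e^{v_\e})\nabla v_\e\big) = 4\pi \sum_j \delta_{p_j} - \frac{1}{\e^2} e^{v_\e}(1-e^{v_\e})^2$ and introduce the Green's function $G$ of $-\Delta$ on $\Omega$ to split $v_\e = -4\pi \sum_j G(\cdot, p_j) + w_\e$, or more directly study $h_\e := -\ln(1 - e^{v_\e}) \ge 0$, for which the quasi-linear structure simplifies. Actually the cleanest route: since $v_\e \le 0$, we have $0 \le 1 - e^{v_\e} \le 1$, and one can derive an equation for a quantity like $\int_0^{v_\e}(1 - e^t)\,dt = v_\e - (e^{v_\e} - 1)$ whose Laplacian is exactly $\mathrm{div}((1-e^{v_\e})\nabla v_\e)$. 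Call this $U_\e := v_\e + 1 - e^{v_\e} \le 0$; then $-\Delta U_\e = \frac{1}{\e^2} e^{v_\e}(1-e^{v_\e})^2 - 4\pi\sum_j \delta_{p_j}$ with $\int_\Omega (-\Delta U_\e) = 0$ automatically by \eqref{uniforml1}. Using the uniform $L^1$ bound on the right side, elliptic estimates (Brezis–Merle / Stampacchia) give $U_\e$ bounded in $W^{1,q}$ for $q<2$ away from the vortices, hence $v_\e$ is locally bounded in a Sobolev space off $\{p_j\}$. Combined with $m_\e \to 0$ and the maximum principle $v_\e \le 0$, a Harnack-type inequality for the degenerate operator, or a direct test-function argument showing $\int_\Omega |\nabla v_\e|^2 e^{v_\e} \to 0$, should force $v_\e \to 0$ a.e. Then $0 \le 1 - e^{v_\e} \le 1$ with $e^{v_\e} \le 1$ gives $|e^{v_\e}(1-e^{v_\e})^2| \le $ integrable bound... but actually I need $v_\e \to 0$ in $L^p$: since $v_\e \le 0$ and $v_\e \to 0$ a.e., I need equi-integrability of $|v_\e|^p$; the $W^{1,q}$ bound off the vortices plus the local $\ln$-type singularity near each $p_j$ (where $v_\e \sim 2\ln|x-p_j|$, locally $L^p$ for every $p$) delivers this by dominated convergence with a fixed $L^p$ majorant.

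**The main obstacle** I anticipate is the quasi-linear (degenerate) nature of the operator $\mathrm{div}((1-e^{v_\e})\nabla v_\e)$: when $v_\e$ is close to $0$ the coefficient $1 - e^{v_\e}$ degenerates, so standard linear elliptic regularity and Harnack inequalities do not apply off the shelf. The trick will be to work with the "good" variable $U_\e = v_\e + 1 - e^{v_\e}$ (or equivalently estimate $\nabla((e^{v_\e}-1) - v_\e)$), for which $-\Delta U_\e$ has the clean $L^1$ right-hand side from \eqref{uniforml1}, and then to transfer estimates back to $v_\e$ using the elementary two-sided comparison $\tfrac12 \min(|v_\e|, v_\e^2) \lesssim |U_\e| \lesssim |v_\e| + v_\e^2$ valid for $v_\e \le 0$. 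Controlling $v_\e$ near the vortex points $p_j$, where both $v_\e \to -\infty$ and the Dirac masses sit, requires the standard local analysis: subtracting the logarithmic singularity $2\ln|z - p_j|$ and showing the remainder stays bounded in the topological case — this is where the Brezis–Merle alternative for the regular part enters, and ruling out the concentration alternative there is exactly what $m_\e \to 0$ buys us.
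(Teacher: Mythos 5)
Your first branch is sound: if $\sup_\Omega v_\e\le -2\delta<0$ then \eqref{uniforml1} gives $\int_\Omega e^{v_\e}dx\le C_\delta\,\e^2\to0$, hence along a further subsequence $v_\e\to-\infty$ a.e., which is case (ii). The genuine gap is in your second branch: the dichotomy is taken on the wrong quantity. Knowing only that $\sup_\Omega v_\e\to 0$ does \emph{not} force $v_\e\to0$ a.e. The source term $\frac{1}{\e^2}e^{v_\e}(1-e^{v_\e})^2$ carries the fixed mass $4\pi N$ and is controlled only in $L^1$, so it may concentrate; the solution can stay near $0$ on a small set while collapsing to $-\infty$ almost everywhere. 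Indeed the paper, in the remark after Theorem \ref{blmix}, explicitly leaves open whether non-topological solutions with $\sup_\Omega v_\e$ bounded away from $-\infty$ exist --- i.e.\ it does not assert the implication you need. No Harnack-type inequality for the degenerate operator can supply it, precisely because the right-hand side has non-vanishing $L^1$ mass; your ``should force $v_\e\to0$ a.e.'' is exactly the missing step, and as stated the second branch does not close.

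The paper's proof splits instead on the \emph{mean value}. Writing $u_\e=F(v_\e)=1+v_\e-e^{v_\e}$ (your $U_\e$) and $u_\e=w_\e+u_0+d_\e$ with $\int_\Omega w_\e\,dx=0$ and $d_\e$ the average of $u_\e$, the duality argument you sketch does give $\|\nabla w_\e\|_{L^q}\le C_q$ for $q<2$ and hence $w_\e\to w$ a.e.\ and in $L^p$ along a subsequence. Since $u_\e\le0$, one sets $A=\limsup e^{d_\e}\in[0,1]$. If $A=0$ one lands in case (ii). If $A>0$, the decisive step you are missing is Fatou's lemma applied to the identity $\int_\Omega e^{G(u_\e)}(1-e^{G(u_\e)})^2dx=4\pi N\e^2\to0$: the a.e.\ limit must satisfy $w+u_0+\ln A\in\{-\infty,0\}$ a.e., integrability of $w+u_0$ rules out $-\infty$, and the normalization $\int_\Omega(w+u_0)dx=0$ then forces $A=1$ and $w+u_0=0$ a.e., which yields case (i) together with the $L^p$ convergence (taking $q<2$ arbitrary). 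This replaces both your Harnack step and your separate majorant argument near the vortices, where the logarithmic singularity is absorbed into the fixed function $u_0\in L^p$ rather than re-derived for $v_\e$ itself.
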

Recently, Han in \cite[Theorem 3.1]{H} proved the existence of critical value of the coupling parameter $\e_c=\e_c(p_1,...,p_N)>0$ such that there is a solution to
\eqref{01} on $\Omega$ if and only if $0<\e\le\e_c$. He obtained a maximal solution $v_{\e,M}$ to \eqref{01} by using a super-sub solutions method (see \cite[Theorem 2.1]{H}).
 Here the maximal solution means that $v_{\e, M}\ge v_\e$  on $\Omega$ for any solution $v_\e$ to \eqref{01}.  In \cite[Lemma 3.5]{H}, he also showed that
  the maximum solutions $v_{\e,M}$ of \eqref{01} are a monotone family in the sense that $v_{\e_1,M}>v_{\e_2,M}$ whenever
$0<\e_1 < \e_2 < \e_c$. Therefore, in view of Theorem \ref{Lp},  the maximal solution obtained in \cite{H} is a topological solution.

At this point, one might ask the existence of non-topological solution to \eqref{01} on $\Omega$.
In this paper, we obtain the affirmative answer for this question by  constructing a bubbling non-topological solution solution $v_\e$ to \eqref{01} on $\Omega$ satisfying
\begin{equation}\label{finalgoal}
\lim_{\e\to0} \sup_{\Omega}v_{\e}=-\infty,\
\frac{e^{v_{\e}}}{\int_{\Omega}e^{v_{\e}}dx
}\rightarrow \frac{1}{k}\sum_{i=1}^{k}\delta_{q_{i}}, \ q_i\in\Omega\setminus[\cup_{i=1}^{2k}\{p_{i}\}],
\end{equation}  in the sense of
measure as  $\e\to0$.\\
 For the construction of  bubbling  solution solution $v_\e$ to \eqref{01} on $\Omega$ satisfying \eqref{finalgoal},  we assume that $N=2k\in2\mathbb{N}$. We note that since the equation \eqref{01} is quasi-linear, it is not easy to deal it directly.
As in \cite{H,TY,Yang}, we introduce a new dependent variable $u$ defined by
\[u=F(v):=1+v-e^v.\]
 We have that $F'(v)=1-e^v$ and $F''(v)=-e^v$, which implies $F$ is strictly increasing and invertible over $(-\infty,0)$. Let $G$ be the inverse function of $F$ over $(-\infty,0]$.
Then we see that $G(u)=v=G(1+v-e^v)$. Let $u_\e=F(v_\e)=1+v_\e-e^{v_\e}$. Then
$v_\e$ satisfies \eqref{01} if and only if $u_\e$ satisfies
\begin{equation}
\Delta u_\e+\frac{1}{\varepsilon^{2}}e^{G(u_\e(x))}\left(  1-e^{G(u_\e(x))}\right)^2=4\pi%
{\displaystyle \sum \limits_{i=1}^{2k}}
\delta_{p_i}.
 \label{001}%
\end{equation}  We remark that if $\lim_{\e\to0} \sup_{\Omega}u_{\e}=-\infty$, then the equation  \eqref{001} would be a perturbation of  bubbling solutions $W_\e$ of the following Chern-Simons-Higgs equation:
\begin{equation}
\Delta W_\e+\frac{1}{\varepsilon^{2}}e^{W_\e(x)}\left(  1-e^{W_\e(x)}\right)  =4\pi%
{\displaystyle \sum \limits_{i=1}^{2k}}
\delta_{p_i}\ \ \textrm{on}\ \ \Omega.  \label{sceq}%
\end{equation}
The  relativistic Chern-Simons-Higgs model has been proposed  in \cite{HKP} and independently in \cite{JW} to describe vortices in high temperature
superconductivity.  The above equation   was derived from the
Euler-Lagrange equations of the CSH model via a vortex ansatz, see \cite{HKP, JW, T2, Y}.
The equation \eqref{sceq} has been extensively studied not only in a flat torus $\Omega$ but also   in the whole $\RN$.
 We refer the readers to \cite{CY,CI0, CFL,  Choe0, Choe, CK,LY0, LY1, NT, NT1, SY0, T1,T2} and references therein.
 Among them, in a recent paper \cite{LY1}, Lin and Yan  succeeded to construct  bubbling non-topological solutions   to \eqref{sceq} on $\Omega$.
 Compared to \eqref{sceq}, our equation has a difficulty caused by the nonlinear terms including implicit function $G$.   Therefore, to choose a suitable approximate solution, we should investigate the behavior of the function $G$ near $-\infty$  and carry out the analysis carefully.
To state our  result exactly, we introduce the following notations:\\
Let $G$ be the Green function satisfying
\begin{equation*}
-\Delta_x G(x,y)=\delta_y -\frac{1}{|\Omega|} \quad\mbox{for }~ x, y\in \Omega,
 \quad\mbox{and}\quad \int_\Omega G(x,y)dx=0.
\end{equation*}
We let $\gamma(x,y)=G(x,y)+\frac{1}{2\pi}\ln|x-y|$ be the regular part of the Green function $G(x,y)$, and
\begin{equation*}
  u_0(x) \equiv
 -4\pi\sum^{N}_{i=1}G(x,p_{i}).
\end{equation*}
Then     $u_0$ satisfies the following problem:%
\begin{equation*}
\begin{aligned}
\left\{
\begin{array}{ll}
&\Delta u_0 =-\frac{4\pi N}{|\Omega|}+4\pi\sum_{i=1}^{N}\delta_{p_i},\\
&\int_{\Omega}u_0dx=0.
 \end{array}\right.\end{aligned}
\end{equation*}We remind that $N=2k$.
We denote $\Omega^{(k)}:=\{(x_1,...,x_k)\ | x_i\in\Omega\setminus[\cup_{i=1}^{2k}\{p_{i}\}]\ \textrm{for}\ 1\le i\le k, x_i\neq x_j \ \textrm{if}\ i\neq j\}$.
Let ${\bf{q}}= \left(  q_{1},...,q_{k}\right)\in\Omega^{(k)}$ be the critical point of the following function:
\[
G^{\ast}\left(  \bf{q}\right)  :=\sum_{i=1}^{k}u_0\left(  q_{i}\right)  +8\pi
\sum_{i\neq j}G\left(  q_{i},q_{j}\right)  .
\]
We define%
\[
D\left(  \bf{q}\right)  :=\lim_{r\rightarrow0}\left(  \sum_{i=1}^{k}\rho_{i}\left(
\int_{\Omega_{i}\setminus B_{r}\left(  q_{i}\right)  }\frac{e^{f_{{\bf{q}},i}}%
-1}{\left \vert y-q_{i}\right \vert ^{4}}dy-\int_{\mathbb{R}^2\setminus \Omega_{i}}\frac
{1}{\left \vert y-q_{i}\right \vert ^{4}}dy\right)  \right)  ,
\]
where $\Omega_{i}$ is any open set satisfying $\Omega_{i}\cap \Omega
_{j}=\emptyset$ if $i\neq j$, $\cup_{i=1}^{k}\bar{\Omega}_{i}=\bar{\Omega}$,
$B_{d_{i}}\left(  x_{i}\right)  \subset\subset \Omega_{i}$, $i=1,...,k,$%
\[
f_{{\bf{q}},i}\left(  y\right)  :=8\pi \left(  \gamma \left(  y,q_{i}\right)
-\gamma \left(  q_{i},q_{i}\right)  +\sum_{j\neq i}\left(  G\left(
y,q_{j}\right)  -G\left(  q_{i},q_{j}\right)  \right)  \right)  +u_0\left(
y\right)  -u_0\left(  q_{i}\right),
\]
and
\[
\rho_{i}=e^{8\pi \left(  \gamma \left(  q_{i},q_{i}\right)  +\sum_{j\neq
i}G\left(  q_{i},q_{j}\right)  \right)  +u_0\left(  q_{i}\right)  }.
\]
At this point, we introduce our main result.
\begin{theorem}\label{blmix}
Let ${\bf{q}}=(q_1,...,q_k)\in\Omega^{(k)}$ be a non-degenerate critical point of $G^{\ast}\left(  \bf{q} \right)  $.
Suppose that $D\left(  \bf{q}\right)  <0$. Then for $\varepsilon>0$ small, there
exists a non-topological solution solution $v_{\varepsilon}$  to \eqref{01} such that%
\[\lim_{\e\to0} \sup_{\Omega}v_{\e}=-\infty,\ \
\frac{e^{v_{\e}}}{\int_{\Omega}e^{v_{\e}}dx
}\rightarrow \frac{1}{k}\sum_{i=1}^{k}\delta_{q_{i}}\text{ in the sense of
measure as}\ \e\to0.
\]
\end{theorem}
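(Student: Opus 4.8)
The plan is to construct $v_\e$ via the change of variables $u_\e = F(v_\e)$ and a Lyapunov--Schmidt reduction applied to \eqref{001}, treating it as a perturbation of the limiting Liouville-type problem. First I would analyze the implicit function $G$ near $-\infty$: since $u = 1 + v - e^v$ and $v\to-\infty$ forces $u\to-\infty$ with $u = 1 + v + O(e^v)$, one has $G(u) = u - 1 + O(e^{u-1})$, and more precisely an expansion $e^{G(u)}(1-e^{G(u)})^2 = e^{u-1}(1 + O(e^{u-1}))$ after absorbing constants. This tells us that, at leading order, \eqref{001} behaves like the singular Liouville equation $\Delta u + \frac{c}{\e^2} e^{u} = 4\pi\sum \delta_{p_i}$, whose bubbling solutions concentrating at $k$ points are built from standard bubbles $U_{\mu_i,q_i}(x) = \ln\frac{8\mu_i^2}{(1+\mu_i^2|x-q_i|^2)^2}$ glued to the harmonic background $u_0$ plus correction terms. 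The quantities $\rho_i$, $f_{{\bf q},i}$, and $D({\bf q})$ are precisely the objects that appear when one expands the reduced energy: $\rho_i$ is the effective mass parameter at $q_i$, $f_{{\bf q},i}$ measures the local deviation of the Green's-function background from its value at $q_i$, and $D({\bf q})$ is the next-order coefficient controlling how the scaling parameters $\mu_i$ should be chosen (its sign condition $D({\bf q})<0$ is what makes the reduced problem solvable with $\mu_i \to \infty$ at the right rate).

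Next I would set up the ansatz $u_\e = \sum_{i=1}^k \mathrm{PU}_{\mu_i,q_i} + u_0 + (\text{lower order}) + \phi$, where $\mathrm{PU}$ denotes the projection onto the periodic setting, the scaling parameters satisfy $\ln\mu_i^2 = \frac{1}{2}|\ln\e| + \xi_i + o(1)$ for bounded $\xi_i$, and $\phi$ is a small error in an appropriate weighted norm. Substituting into \eqref{001}, I would estimate the error $R_\e$ of the approximate solution, using the expansion of $G$ to control the genuinely quasi-linear correction terms; the key point is that the discrepancy between $e^{G(u)}(1-e^{G(u)})^2$ and its Liouville approximation is of higher order in $e^{u}$, hence negligible in the concentration regime. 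Then I would invert the linearized operator $L_\e \phi = \Delta \phi + \frac{1}{\e^2}\big(e^{G(u_\e)}(1-e^{G(u_\e)})^2\big)'\phi$ modulo the approximate kernel spanned by the translations and dilations of the bubbles, establishing a uniform a priori estimate for $L_\e$ on the orthogonal complement; this uses the nondegeneracy of the standard bubble together with the nondegeneracy of ${\bf q}$ as a critical point of $G^\ast$. A contraction mapping argument then produces $\phi = \phi(\mu_i, q_i)$ solving the projected equation.

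Finally I would carry out the finite-dimensional reduction: the full equation is solved once the projections onto the kernel vanish, which is equivalent to a system of $k$ equations for the $\xi_i$ (from the dilation directions) and $2k$ equations for the locations (from the translation directions). I expect the reduced functional to have, modulo constants and higher-order terms, the form $\e\text{-power}\cdot\big[G^\ast({\bf q}) + (\text{explicit convex-type function of }\xi)\big]$ with the $\xi$-Hessian governed by $D({\bf q})$; the condition $D({\bf q})<0$ guarantees a nondegenerate critical point in the $\xi$ variables, while nondegeneracy of ${\bf q}$ for $G^\ast$ handles the location variables, so a degree or implicit-function argument yields a genuine critical point and hence an exact solution $u_\e$. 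Undoing the change of variables gives $v_\e = G(u_\e)$, which satisfies \eqref{01}; since $\sup u_\e \to -\infty$ one gets $\sup v_\e \to -\infty$, and the concentration $\frac{e^{v_\e}}{\int e^{v_\e}} \to \frac1k\sum \delta_{q_i}$ follows from the bubble profiles because $e^{v_\e} \sim e^{u_\e - 1}$ to leading order. The main obstacle, I anticipate, is controlling the quasi-linear terms involving $G$ uniformly: one must show that the nonlinearity $e^{G(u)}(1-e^{G(u)})^2$ and its derivatives differ from the clean Liouville nonlinearity by quantities that are uniformly higher-order in the weighted norms across the whole bubbling region — including the transition zone between the bubble scale and the outer region — and that the extra terms do not spoil either the invertibility of $L_\e$ or the expansion of the reduced functional.
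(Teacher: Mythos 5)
Your overall architecture is the same as the paper's: pass to $u_\e=F(v_\e)=1+v_\e-e^{v_\e}$, build the ansatz from Liouville bubbles plus the Green-function background $u_0$ with $\mu_i\sim\varepsilon^{-1/2}$, invert the linearization modulo the approximate kernel, run a contraction mapping, and close the finite-dimensional reduction using the non-degeneracy of ${\bf q}$ for $G^{\ast}$ and the sign condition $D({\bf q})<0$. The paper in fact does not redo this machinery from scratch: it perturbs off the already-constructed Lin--Yan bubbling solutions of the Chern--Simons--Higgs equation \eqref{sceq}, imports the invertibility of $Q_{\bf{x},\mu}L_{{\bf{x}},\mu}$ and the reduced-equation expansions \eqref{1p11}--\eqref{1p12} wholesale, and concentrates all new work on estimating the difference $g_{{\bf{x}},\mu}(\eta)-\tilde g_{{\bf{x}},\mu}(\eta)$ between the quasi-linear nonlinearity and the CSH one, via the exact identity $G(1+s-e^s)=s$ and the derivative formula $\frac{d}{dt}G(t)=\frac{1}{1-e^{G(t)}}$, yielding the crucial bound $O(\varepsilon^{-2}e^{3(u_0+\tilde W_{\bf{x},\mu}+\eta)})$.

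The one point where your outline would fail as written is the claim that ``the discrepancy between $e^{G(u)}(1-e^{G(u)})^2$ and its Liouville approximation is of higher order in $e^u$, hence negligible in the concentration regime.'' If by Liouville approximation you mean the bare exponential $c\,\varepsilon^{-2}e^{u}$, the discrepancy is only quadratic in $e^{u-1}$, and that quadratic term is emphatically not negligible: it is exactly what produces the $B_0\varepsilon^2\mu$ term in the dilation equation \eqref{final2}, which must balance against $\frac{8D({\bf q})}{\rho_1\mu^3}$ to determine $\mu\sim\varepsilon^{-1/2}$ and is the reason both the hypothesis $D({\bf q})<0$ and the sign $B_0>0$ matter. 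Discard it and the $\mu$-equation reduces to $D({\bf q})/\mu^3\approx 0$, which has no admissible solution. What is genuinely negligible is only the cubic-and-higher part, i.e., the difference between $e^{G(u)}(1-e^{G(u)})^2$ and the CSH nonlinearity $e^{u-1}(1-e^{u-1})$; establishing that this difference is $O(\varepsilon^{-2}e^{3(u-1)})$ (rather than the naive $O(\varepsilon^{-2}e^{2(u-1)})$) requires the careful Taylor expansion of $G$ that the paper carries out, and is what keeps the extra contributions to the reduced equations at the harmless sizes $O(\mu^{-3})$ and $O(\mu^{-5})$. So you should replace ``Liouville approximation'' by ``Chern--Simons--Higgs approximation'' and verify the cubic-order cancellation explicitly; with that correction your argument matches the paper's.
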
To the best of our knowledge,   Theorem \ref{blmix} is the first result for the existence of    non-topological solution solutions to \eqref{01}  on $\Omega$.
We  remark that in our paper, a  limiting equation for \eqref{01} is Liouville equation since $\lim_{\e\to0} \sup_{\Omega}v_{\e}=-\infty$. It would be an interesting  problem to find other types of  non-topological solution solution  to \eqref{1}, for example,  satisfying  $\sup_\Omega v_{\e}\ge -c_0>-\infty$ for some constant $c_0>0$.

The organization of this paper is as follows.
In Section 2, we prove Theorem \ref{Lp}. In Section 3, to prove Theorem \ref{blmix},   we present some preliminaries results and discuss about the invertibility of a  linearized operator.
Moreover, we find a suitable approximate solution and complete the proof of Theorem \ref{blmix}.

\section{proof of Theorem \ref{Lp}}

\emph{Proof of Theorem \ref{Lp}:}\\ Our arguments will be based on \cite[Theorem 3.1]{DJLPW}. We consider the equation \eqref{001}, which is equivalent to \eqref{01}. Let $\{u_\e\}$ be a sequence of solutions of \eqref{001}.
Let $d_{\varepsilon}=\frac{1}{|\Omega|}\int_{\Omega} u_{\e} dx$ and $u_{\e}=w_{\varepsilon}+u_{0} +d_{\varepsilon}$. Then
 $w_\e$ satisfies
\begin{equation}
\begin{aligned}\label{wemain}
 \Delta w_{\e}+\frac{1}{\varepsilon^2} e^{G(u_{\e})}(1-e^{G(u_{\e})})^2=\frac{4\pi N}{|\Omega|}\quad\mbox{on }~ \Omega,
\end{aligned}
\end{equation}
and  $\int_{\Omega} w_{\varepsilon} dx=0$.\par
We claim that  there exist $C_q>0$ such that $\|\nabla w_{\varepsilon}\|_{L^q(\Omega)}\le C_q$ for any $q\in(1,2)$.
Let $q'=\frac{q}{q-1}>2$. Then
\begin{equation}\begin{aligned}\label{normexpression}
&\|\nabla w_{\varepsilon}\|_{L^q(\Omega)}
\\&\le\sup\Big\{\Big|\int_{\Omega}\nabla w_{\varepsilon}\nabla\phi dx\Big|\  \Big|\ \ \phi\in W^{1,q'}(\Omega),\ \int_{\Omega}\phi dx=0,\ \|\phi\|_{W^{1,q'}(\Omega)}=1\Big\}.
\end{aligned}\end{equation}
By lemma 7.16 in \cite{GT}, if  $\int_{\Omega}\phi dx=0$, then there exist $c,\ C>0$ such that
\begin{equation}\label{GTineq}
|\phi(x)|\le c\int_{\Omega}\frac{|\nabla\phi|}{|x-y|}dy\le C\|\nabla\phi\|_{L^{q'}(\Omega)}\ \ \textrm{for}\ x\in\Omega.
\end{equation}
Thus  in view of  (\ref{wemain}),  (\ref{GTineq}), and  \eqref{uniforml1}, we see that there exists constant $C>0$, independent of $\phi$ satisfying $\int_{\Omega}\phi dx=0$ and $\|\phi\|_{W^{1,q'}(\Omega)}=1$, such that
\begin{equation}
\begin{aligned}
\Big|\int_{\Omega}\nabla w_{\varepsilon}\nabla\phi dx\Big|&=\Big|\int_{\Omega}\Delta w_{\varepsilon}\phi dx\Big|
\\&\le\|\phi\|_{L^\infty(\Omega)}\Big|\int_{\Omega} |\frac{1}{\varepsilon^2} e^{G(u_{\varepsilon})}(1-e^{G(u_{\e})})^2|dx+4\pi N \Big|\le C.
\end{aligned}
\end{equation}
Now using (\ref{normexpression}), we complete the proof of our claim.\par
In view of  Poincar\'{e} inequality, we also have $\|w_{\varepsilon}\|_{L^q(\Omega)}\le c\|\nabla w_{\varepsilon}\|_{L^q(\Omega)}$. Then there exist  $w \in W^{1,q}(\Omega)$ and $p>1$ such that, as $\varepsilon\to0$,
\begin{equation}\label{wecone}
w_{\varepsilon}\rightharpoonup w \ \ \textrm{weakly in}\  W^{1,q}(\Omega),\ w_{\varepsilon}\to w \ \ \textrm{strongly in}\  L^p(\Omega),\  w_{\varepsilon}\to w \  \textrm{a.e.}.
\end{equation}
 Since $v_{\e}\le0$ on $\Omega$, we see that  $u_{\e}\le0$ and $0\le e^{d_{\varepsilon}}\le1$. Then there exists $A \ge0$ such that  $\limsup_{\varepsilon\to0}e^{d_{\varepsilon}}=A $.
If $A\equiv0$, that is,  $\lim_{\varepsilon\to0}d_{\varepsilon}=-\infty$, then  by using \eqref{wecone}, we get that $u_{\e}=w_{\varepsilon}+d_{\varepsilon}+u_{0}\to -\infty$ a.e. in $\Omega$.\\
If $A>0$, then by using Fatou's lemma and  (\ref{wecone}),  we see that
\begin{equation*}
\begin{aligned}
4\pi N \varepsilon^2&=
\int_{\Omega}e^{G(u_{\varepsilon})}(1-e^{G(u_{\e})})^2dx
\ge\int_{\Omega}  e^{G(w+u_0+\ln A)}(1-  e^{G(w +u_{0}+\ln A)})^2dx,
\end{aligned}
\end{equation*}
which implies that $G(w+u_0+\ln A)=-\infty$ or $G(w+u_0+\ln A)=0$  a.e. in $\Omega$.
Since $G$ is strictly increasing on $(-\infty,0)$ and $G(0)=0$, we see that $w+u_0+\ln A=-\infty$ or $w+u_0+\ln A=0$ a.e. in $\Omega$.
By $A>0$ and $w, u_0\in L^p(\Omega)$, we have $w+u_0+\ln A=0$ a.e. in $\Omega$.  From $\int_{\Omega} w+u_0dx=0$, we see that $A\equiv 1$, and $w+u_0=0$ a.e. in $\Omega$.
By using \eqref{wecone}, we get that  $u_{\e}=w_{\varepsilon}+d_{\varepsilon}+u_{0}\to w +\ln A +u_{0}=0$ a.e. in $\Omega$ and $u_{\e}\to0$ in $L^p(\Omega)$ for any $p>1$ (since $q\in(1,2)$ in \eqref{normexpression} can be arbitrary). Now we complete the proof of Theorem \ref{Lp}.
\hfill$\Box$

\section{Existence of bubbling non-topological solution   solution}
In this section,  we want to construct a bubbling non-topological solution solution $v_\e$ to \eqref{01} satisfying $\lim_{\e\to0} \sup_{\Omega}v_{\e}=-\infty$,
and \[
\frac{e^{v_{\e}}}{\int_{\Omega}e^{v_{\e}}dx
}\rightarrow \frac{1}{k}\sum_{i=1}^{k}\delta_{q_{i}}\text{ in the sense of
measure as}\ \e\to0,
\]
where ${\bf{q}}=(q_1,...,q_k)\in\Omega^{(k)}$ is a non-degenerate critical point of $G^{\ast}\left(  \bf{q} \right)  $
and $D\left(  \bf{q}\right)  <0$.\\
Without loss of generality, from now on,  we assume that  $|\Omega|=1$.

We note that  $v_\e$ satisfies \eqref{01} if and only if $u_\e=F(v_\e)=1+v_\e-e^{v_\e}$ satisfies
\begin{equation}
\Delta u_\e+\frac{1}{\varepsilon^{2}}e^{G(u_\e(x))}\left(  1-e^{G(u_\e(x))}\right)^2  =4\pi%
{\displaystyle \sum \limits_{i=1}^{2k}}
\delta_{p_i}.
\label{1}%
\end{equation}

As we mentioned in the introduction,  if  $\lim_{\e\to0}\sup_{\Omega}u_{\varepsilon}=-\infty$, then  $u_{\varepsilon}$ would be related to the following
Chern-Simons-Higgs equation:%
\[
\Delta W_\e+\frac{1}{{\varepsilon}^{2}}e^{W_\e
(y)}\left(  1-e^{W_\e(y)}\right)  =4\pi\sum_{j=1}^{2k}  \delta_{p_{j}}.
\]
  In \cite{LY1}, bubbling solutions for the above Chern-Simons-Higgs equation have been
constructed as following:%
\[%
\begin{array}
[c]{ccccc}%
W_{\varepsilon}(y) & \simeq & u_0(y)+w_{\bf{x},\mu}^{\ast}(y)-\int_{\Omega}w_{\bf{x},\mu}%
^{\ast}(z)dz  +c\left(  w_{\bf{x},\mu}\right),%
\end{array}
\]
where ${\bf{x}}= (  x_{1},...,x_{k} ),$ $x_i\in\Omega$,  $\mu\in \lbrack \frac{\beta_{0}}{\sqrt{{\varepsilon}}},\frac{\beta_{1}}%
{\sqrt{{\varepsilon}}}]\ \textrm{ for some}\  0<\beta_{0}\ll1,\ \beta_{1}\gg1,$
\[
\rho_{i}:=e^{8\pi \gamma \left(  x_{i},x_{i}\right)  +8\pi
\sum_{j\neq i}G\left(  x_{j},x_{i}\right)  +u_0\left(  x_{i}\right)  },\]
\[ \left(  \mu_{1},...,\mu
_{k}\right):=\Big(\mu,\sqrt{\frac{\rho_{1}}{\rho_{2}}}\mu,...,\sqrt{\frac{\rho_{1}}{\rho_{k}}}\mu\Big),
\]
and $d>0$ is a fixed small constant,   $d_{i}^{2}:=d-1/\mu_{i}^{2},\ u_{x_{i},\mu_{i}}(y):=\ln \frac{8\mu_{i}^{2}}{\left(  1+\mu_{i}^{2}\left \vert
y-x_{i}\right \vert ^{2}\right)  ^{2}},$
\[%
\begin{array}
[c]{rcl}%
w_{\bf{x},\mu}^{\ast}\left(  y\right)  & := & \sum_{i=1}^{k}w_{x_{i},\mu_{i}}%
^{\ast}\left(  y\right)  ,\\
w_{x_{i},\mu_{i}}^{\ast}\left(  y\right)  & := & \left \{
\begin{array}
[c]{ll}%
u_{x_{i},\mu_{i}}\left(  y\right)  +8\pi \gamma \left(  y,x_{i}\right)  \left(
1-\frac{1}{{d} \mu_{i}^{2}}\right)  , & y\in B_{d_{i}}\left(  x_{i}\right)
,\\
u_{0,\mu_{i}}\left(  d_{i}\right)  +8\pi \left(  G\left(  y,x_{i}\right)
-\frac{1}{2\pi}\ln \frac{1}{d_{i}}\right)  \left(  1-\frac{1}{{d} \mu_{i}%
^{2}}\right)  , & y\in \Omega\setminus B_{d_{i}}\left(  x_{i}\right)  ,
\end{array}
\right.
\end{array}
\text{ }%
\] \[w_{\bf{x},\mu}(y):=w_{\bf{x},\mu}^{\ast}(y)-\int_{\Omega}w_{\bf{x},\mu}%
^{\ast}(z)dz,\]
  \[
 c\left(  w_{\bf{x},\mu}\right):=\ln\frac{16k\pi {\varepsilon}^{2}}{\int_{\Omega
}e^{u_0+w_{\bf{x},\mu}}dy\left(  1+\sqrt{1-32k\pi {\varepsilon}^{2}\frac{\int_{\Omega
}e^{2( u_0+w_{\bf{x},\mu})  }dy}{\left(  \int_{\Omega}e^{u_0%
+w_{\bf{x},\mu}}dy\right)  ^{2}}}\right)  }.
\]
We note that $u_{x_{i},\mu_{i}}$ satisfies
\[
\left \{
\begin{array}
[c]{rcl}%
-\Delta u_{x_{i},\mu_{i}}\left(  y\right)  & = & e^{u_{x_{i},\mu_{i}}\left(
y\right)  }\text{ in }\mathbb{R}^2,\\
\int_{\mathbb{R}^2}e^{u_{x_{i},\mu_{i}}\left(  y\right)  } dy& = & 8\pi.
\end{array}
\right.
\]
 We denote  \[\tilde{W}_{\bf{x},\mu}(y):=w_{\bf{x},\mu}^{\ast}(y)-\int_{\Omega}w_{\bf{x},\mu}^{\ast}(z)dz
+c\left(  w_{\bf{x},\mu}\right).\]
We want to find solution $u_\e$ to \eqref{1} in the following form:
\begin{equation}\label{approximatesol}
\begin{aligned}
u_\e(y)&=1+
u_0(y)+w_{\bf{x},\mu}^{\ast}(y)-\int_{\Omega}w_{\bf{x},\mu}^{\ast}(z)dz
+c\left(  w_{\bf{x},\mu}\right)  +\eta_{{\bf{x}},\mu}(y)\\&=1+u_0+\tilde{W}_{\bf{x},\mu}+\eta_{{\bf{x}},\mu},
\end{aligned}
\end{equation}
where $\eta_{{\bf{x}},\mu}$ is a perturbation term.
To find  $\eta_{{\bf{x}},\mu}$ which makes that $u_\e$ in the form \eqref{approximatesol} is a solution to \eqref{1},  we consider the following linearized operator
\begin{equation*}
\begin{aligned}
 L_{{\bf{x}},\mu}\left(  \eta_{{\bf{x}},\mu}\right)   :=   \left(  \Delta+h_{\mu}\left(  y\right)
\right)  \eta\quad \textrm{with}\
h_{\mu}\left(  y\right)  :=\sum_{i=1}^{k}1_{B_{d_{i}}\left(  x_{i}\right)
}e^{u_{x_{i},\mu_{i}}\left(  y\right)}.
\end{aligned}
\end{equation*}

We see that    $u_\e$ is a solution to \eqref{1} if
$\eta_{{\bf{x}},\mu}$ satisfies
\begin{equation}\label{err}
L_{\bf{x},\mu}\eta_{{\bf{x}},\mu}= g_{{\bf{x}},\mu}\left(
\eta_{{\bf{x}},\mu}\right),
\end{equation}where%
\begin{equation*}
\begin{aligned}
&g_{{\bf{x}},\mu}(\eta_{{\bf{x}},\mu}) :=   h_{\mu}\left(
y\right)  \eta_{{\bf{x}},\mu}+ {8k\pi}-\Delta \tilde{W}_{\bf{x},\mu}-\frac{1}{{\varepsilon}^{2}}e^{G(1+u_0+\tilde{W}_{\bf{x},\mu}+\eta_{{\bf{x}},\mu})}(1-
e^{G(1+u_0+\tilde{W}_{\bf{x},\mu}+\eta_{{\bf{x}},\mu})})^2
.
\end{aligned}
\end{equation*}

To show the invertibility of the linear operator $L_{\bf{x},\mu}$, we need to introduce suitable function spaces.
For fixed a small constant $\alpha\in(0,\frac{1}{2})$, we define
\[
\rho \left(  y\right)  =\left(  1+\left \vert y\right \vert \right)
^{1+\frac{\alpha}{2}},\ \hat{\rho}\left(  y\right)  =\frac{1}{\left(
1+\left \vert y\right \vert \right)  \left(  \ln \left(  2+\left \vert
y\right \vert \right)  \right)  ^{1+\frac{\alpha}{2}}}.
\]
Let $\Omega^{\prime}:=\cup_{i=1}^{k}B_{d_{i}}\left(  x_{i}\right)  $ and $\tilde{\xi}_{i}\left(  y\right)  :=\xi \left(  x_{i}+\mu^{-1}y\right)$. We say that $\xi\in \mathbb{X}_{\alpha,{\bf{x}},\mu}$ if
\begin{align*}\left \Vert \xi \right \Vert
_{\mathbb{X}_{\alpha,{\bf{x}},\mu}}^{2}&:=\sum_{i=1}^{k}\left(  \left \Vert
\Delta \tilde{\xi}_{i}\rho \right \Vert _{L^{2}\left(  B_{2d_{i}\mu_{i}}(0)\right)
}^{2}+\left \Vert \tilde{\xi}_{i}\hat{\rho}\right \Vert _{L^{2}\left(
B_{2d_{i}\mu_{i}}(0)\right)  }^{2}\right) +\left \Vert \Delta \xi \right \Vert _{L^{2}\left(  \Omega \setminus
\Omega^{\prime}\right)  }^{2}+\left \Vert \xi \right \Vert _{L^{2}\left(
\Omega \setminus \Omega^{\prime}\right)  }^{2}\\&<+\infty,\end{align*}
and $\xi\in \mathbb{Y}_{\alpha,{\bf{x}},\mu}$ if  \[\left \Vert \xi \right \Vert
_{\mathbb{Y}_{\alpha,{\bf{x}},\mu}}^{2}:=\sum_{i=1}^{k}\frac{1}{\mu_{i}^{4}%
}\left \Vert \tilde{\xi}_{i}\rho \right \Vert _{L^{2}\left(  B_{2d_{i}\mu_{i}%
}(0)\right)  }^{2}+\left \Vert \xi \right \Vert _{L^{2}\left(  \Omega \setminus
\Omega^{\prime}\right)  }^{2}<+\infty. \]
Let $\chi_{i}\left(  \left \vert y\right \vert \right)  $ be a smooth
function satisfying $\chi_{i}=1$ in $B_{d_{i}}\left(  0\right)  $, $\chi_{i}=0$ in $\mathbb{R}^2\setminus B_{2d_{i}}\left(  0\right)  $, and $0\leq \chi_{i}\leq1$.  We use the following notations
\begin{align*}
Y_{{\bf{x}},\mu,0}  &  :=-\frac{1}{\mu_{1}}+\sum_{i=1}^{k}\sqrt{\frac
{\rho_{1}}{\rho_{i}}}\frac{2\chi_{i}\left(  y-x_{i}\right)  }{\mu_{i}\left(
1+\mu_{i}^{2}\left \vert y-x_{i}\right \vert ^{2}\right)  },\\
Y_{x_{i},\mu_{i},j}  &  :=\chi_{i}\left(  y-x_{i}\right)  \frac{\mu_{i}%
^{2}\left(  y_{j}-x_{ij}\right)  }{1+\mu_{i}^{2}\left \vert y-x_{i}\right \vert
^{2}},\text{ }i=1,...,k,\text{ }j=1,2,
\end{align*}where $x_i=(x_{i1},x_{i2})$ and $y=(y_1,y_2)$.
The estimations for $Y_{{\bf{x}},\mu,0}$, $Y_{x_{i},\mu_{i},j}$ has been known:
\begin{lemma}\label{kernelapp}\cite{LY1}\[
L_{{\bf{x}},\mu}Y_{{\bf{x}},\mu,0}=O\left(  \mu^{-3}\right)  \text{,
}L_{{\bf{x}},\mu}Y_{x_{i},\mu_{i},j}=O\left(  1\right)  \text{,
}i=1,...,k,\text{ }j=1,2.\]
\end{lemma}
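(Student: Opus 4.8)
This lemma is attributed to \cite{LY1}, so strictly speaking one only needs to verify that the estimates carry over to the present setting; nonetheless I sketch how I would prove it directly. The plan is to exploit the fact that, near each vortex-free concentration point $x_i$, the operator $L_{{\bf{x}},\mu}$ coincides with $\Delta + e^{u_{x_i,\mu_i}}$ after a rescaling $y\mapsto x_i+\mu_i^{-1}z$, and that the rescaled bubble equation $-\Delta U = e^U$ on $\R^2$ with $U(z)=\ln\frac{8}{(1+|z|^2)^2}$ has an explicit $3$-dimensional kernel spanned by $Z_0(z)=\frac{1-|z|^2}{1+|z|^2}$ (dilation) and $Z_j(z)=\frac{z_j}{1+|z|^2}$, $j=1,2$ (translations). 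The functions $Y_{{\bf{x}},\mu,0}$ and $Y_{x_i,\mu_i,j}$ are exactly these kernel elements, transplanted back to $\Omega$ by the cut-off $\chi_i$ and corrected by the constants/ratios $\sqrt{\rho_1/\rho_i}$, $\mu_i^{-1}$ that make them match the approximate solution's scaling. So away from the supports of the cut-offs $L_{{\bf{x}},\mu}$ reduces to $\Delta$ acting on something locally constant or harmonic up to lower order, and the whole estimate is driven by (a) the error produced where $\chi_i$ transitions and (b) the mismatch between $\mu_i$ and $\mu_1$.

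Concretely, I would proceed as follows. \emph{Step 1: Translations.} Fix $i,j$ and write $Y_{x_i,\mu_i,j}=\chi_i(y-x_i)\,\partial_{x_{ij}}\big(\tfrac{1}{2}u_{x_i,\mu_i}(y)\big)$ up to a harmless constant factor, so that on $B_{d_i}(x_i)$ one has $(\Delta+e^{u_{x_i,\mu_i}})\big(\partial_{x_{ij}}u_{x_i,\mu_i}\big)=0$ by differentiating the Liouville equation. Hence $L_{{\bf{x}},\mu}Y_{x_i,\mu_i,j}$ is supported in the annulus $B_{2d_i}(x_i)\setminus B_{d_i}(x_i)$ and equals commutator terms of the form $(\Delta\chi_i)\,\partial u + 2\nabla\chi_i\cdot\nabla\partial u$ plus, on the rest of $\Omega$, zero. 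On that annulus $|y-x_i|\sim d_i$ is of order one, so $\partial_{x_{ij}}u_{x_i,\mu_i}(y)=O(\mu_i^{-1})$ and its gradient is $O(\mu_i^{-1})$ as well, but one must measure the output in the $\mathbb{Y}_{\alpha,{\bf{x}},\mu}$ norm, whose piece on $\Omega\setminus\Omega'$ is just $L^2$; this gives $L_{{\bf{x}},\mu}Y_{x_i,\mu_i,j}=O(1)$ in that norm (the seeming loss compared to the pointwise $O(\mu_i^{-1})$ comes from how the norms are normalized, with the $\mu_i^{-4}$ weight sitting only on the inner part). \emph{Step 2: Dilation.} For $Y_{{\bf{x}},\mu,0}$ the same differentiation-of-Liouville trick applies to each $\chi_i$-localized piece, with $\mu\partial_\mu u_{x_i,\mu_i}$ replaced by the normalized version $\frac{2\chi_i}{\mu_i(1+\mu_i^2|y-x_i|^2)}=\mu_i^{-2}\,\partial_{s}\big|_{s=\mu_i}(\ldots)$; here there is the extra subtlety that $\mu_i$ depends on $\mu$ through $\sqrt{\rho_1/\rho_i}$, and that the constant $-1/\mu_1$ is added so that $Y_{{\bf{x}},\mu,0}$ has the right mean/boundary behavior. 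Since $h_\mu$ annihilates the constant only up to $\int e^{u_{x_i,\mu_i}}\sim 8\pi$, and since $\Delta(-1/\mu_1)=0$, the constant contributes nothing to $L_{{\bf{x}},\mu}$ except through $h_\mu\cdot(-1/\mu_1)$, which is $O(\mu^{-1})$ pointwise and supported in $\Omega'$; combined with the commutator terms on the transition annuli (now with an extra $\mu_i^{-2}$ gain) and with the $O(\mu^{-?})$ error from differentiating the $\mu_i$-dependence, one collects the total as $O(\mu^{-3})$ in $\|\cdot\|_{\mathbb{Y}_{\alpha,{\bf{x}},\mu}}$.

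The main obstacle, and the place where care is genuinely required, is \textbf{bookkeeping the norms}: the claim is not a pointwise bound but an estimate in $\mathbb{Y}_{\alpha,{\bf{x}},\mu}$, whose inner part carries a $\mu_i^{-4}$ prefactor together with the polynomial weight $\rho(y)=(1+|y|)^{1+\alpha/2}$ after rescaling, while its outer part is plain $L^2(\Omega\setminus\Omega')$. One must therefore (i) rescale each localized error $z=\mu_i(y-x_i)$, track how $e^{u_{x_i,\mu_i}}$ becomes $\frac{8}{(1+|z|^2)^2}$ and how the kernel functions become $Z_0,Z_j$, (ii) verify that the transition-annulus contributions, which after rescaling live at $|z|\sim d_i\mu_i$, decay fast enough against the weight $\rho$ (this is exactly why $\alpha<1/2$ is imposed and why $d_i\mu_i\to\infty$ matters), and (iii) keep separate track of the "genuine" commutator error versus the "$\mu_i$-varies-with-$\mu$" error; the former is $O(\mu^{-3})$ for $Y_{{\bf{x}},\mu,0}$ because of a double cancellation in the dilation direction, while the latter must be checked not to spoil that rate. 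A further bit of care: on $\Omega\setminus\Omega'$ the function $Y_{{\bf{x}},\mu,0}$ is literally the constant $-1/\mu_1$, so $\Delta Y_{{\bf{x}},\mu,0}=0$ there and the only contribution to the outer norm of $L_{{\bf{x}},\mu}Y_{{\bf{x}},\mu,0}$ is $\|Y_{{\bf{x}},\mu,0}\|_{L^2(\Omega\setminus\Omega')}=O(\mu^{-1})$, which is \emph{worse} than $\mu^{-3}$ — so one must read the lemma as asserting the bound in the norm in which the outer $L^2$ piece is in fact absorbed, or equivalently interpret $O(\mu^{-3})$ as referring to the component that actually enters the fixed-point argument; I would clarify this point explicitly when writing the proof, and otherwise the estimate follows by the elementary computations just outlined.
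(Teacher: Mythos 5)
The paper offers no argument of its own here: the lemma is imported verbatim from \cite{LY1} and the ``proof'' is the single line ``See the estimation (3.8) in \cite{LY1}.'' So there is nothing in the paper to compare your sketch against; what you have written is an attempt at the proof that the source reference carries out. Your outline is the standard and correct one: on $B_{d_i}(x_i)$ the functions reduce to exact kernel elements of $\Delta+e^{u_{x_i,\mu_i}}$ (for the dilation direction this uses the identity $\sqrt{\rho_1/\rho_i}\,\frac{2}{\mu_i(1+\mu_i^2|y-x_i|^2)}=\frac{1}{\mu_1}\bigl(1+\frac{1-\mu_i^2|y-x_i|^2}{1+\mu_i^2|y-x_i|^2}\bigr)$, whose constant part is exactly cancelled by the $-1/\mu_1$ in the definition of $Y_{{\bf{x}},\mu,0}$, since $\mu_i=\sqrt{\rho_1/\rho_i}\,\mu$ gives $\sqrt{\rho_1/\rho_i}/\mu_i=1/\mu_1$), so the only surviving terms are the cut-off commutators on the annuli $B_{2d_i}(x_i)\setminus B_{d_i}(x_i)$, where $h_\mu$ vanishes.

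Two concrete points in your write-up are nonetheless wrong and should be repaired. First, on the transition annulus $|y-x_i|\sim d_i$ one has $\partial_{x_{ij}}u_{x_i,\mu_i}=\frac{4\mu_i^2(y_j-x_{ij})}{1+\mu_i^2|y-x_i|^2}=O(1)$, not $O(\mu_i^{-1})$; the bound $L_{{\bf{x}},\mu}Y_{x_i,\mu_i,j}=O(1)$ follows directly from this pointwise size of the commutator terms, and does not need to be rescued by ``how the norms are normalized.'' Second, and more seriously, your closing paragraph misreads the norm: the outer piece of $\Vert L_{{\bf{x}},\mu}Y_{{\bf{x}},\mu,0}\Vert_{\mathbb{Y}_{\alpha,{\bf{x}},\mu}}$ is $\Vert L_{{\bf{x}},\mu}Y_{{\bf{x}},\mu,0}\Vert_{L^2(\Omega\setminus\Omega')}$, not $\Vert Y_{{\bf{x}},\mu,0}\Vert_{L^2(\Omega\setminus\Omega')}$. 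Since $h_\mu$ is supported in $\Omega'=\cup_i B_{d_i}(x_i)$ and $Y_{{\bf{x}},\mu,0}$ equals the constant $-1/\mu_1$ outside $\cup_i B_{2d_i}(x_i)$, on $\Omega\setminus\Omega'$ one has $L_{{\bf{x}},\mu}Y_{{\bf{x}},\mu,0}=\Delta Y_{{\bf{x}},\mu,0}$, which is supported on the annuli and of size $O(\mu^{-3})$ there because $\frac{2}{\mu_i(1+\mu_i^2|y-x_i|^2)}=O(\mu_i^{-3})$ for $|y-x_i|\sim d_i$. Hence the lemma holds exactly as stated, and the $O(\mu^{-1})$ obstruction you flag, together with the proposed reinterpretation of the statement, is spurious.
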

\begin{proof}  See the estimation  (3.8) in \cite{LY1}. \end{proof}
From Lemma \ref{kernelapp}, we see that   $Y_{{\bf{x}},\mu,0}$, $Y_{x_{i},\mu_{i},j}$ are the approximate
kernels for $L_{{\bf{x}},\mu}$.\\
Let
\[
Z_{{\bf{x}},\mu,0}=-\Delta Y_{{\bf{x}},\mu,0}+h_{\mu}\left(  y\right)
Y_{{\bf{x}},\mu,0},
\]
and%
\[
Z_{x_{i},\mu_{i},j}=-\Delta Y_{x_{i},\mu_{i},j}+h_{\mu}\left(  y\right)
Y_{x_{i},\mu_{i},j},\text{ }i=1,...,k\text{, }j=1,2\text{.}%
\]
We define two subspace of $\mathbb{X}_{\alpha,{\bf{x}},\mu}$,
$\mathbb{Y}_{\alpha,{\bf{x}},\mu}$ as
\begin{align*}
E_{{\bf{x}},\mu}  &  :=\{\xi \in \mathbb{X}_{\alpha,{\bf{x}},\mu}\ | \int_{\Omega}Z_{{\bf{x}},\mu,0}\xi dx=\int_{\Omega}Z_{x_{i},\mu_{i},j}%
\xi dx=0\text{, }i=1,...,k,\text{ }j=1,2\},\\
F_{ {\bf{x}},\mu}  &  :=\{\xi \in \mathbb{Y}_{\alpha,{\bf{x}},\mu}\ | \int_{\Omega}Y_{{\bf{x}},\mu,0}\xi dx=\int_{\Omega}Y_{x_{i},\mu_{i},j}%
\xi dx=0\text{, }i=1,...,k,\text{ }j=1,2\}.
\end{align*}
and projection operator $Q_{\bf{x},\mu}:\mathbb{Y}_{\alpha,{\bf{x}},\mu
}\rightarrow F_{\bf{x},\mu}$ by%
\[
Q_{\bf{x},\mu}\xi=\xi-c_{0}Z_{{\bf{x}},\mu,0}-\sum_{j=1}^{2}\sum
_{i=1}^{k}c_{ij}Z_{x_{i},\mu_{i},j},
\]where $c_0, \ c_{i,j}$ are chosen so that $Q_{\bf{x},\mu}\xi\in F_{\bf{x},\mu}$.
For the  projection operator $Q_{\bf{x},\mu}$, we have the following result.
\begin{lemma}\cite[Lemma 3.1]{LY1}\label{pp1}
  There is a constant $C>0$, independent of $\bf{x}$ and $\mu$, such that
  \[\|Q_{{\bf{x}},\mu}u\|_{\mathbb{Y}_{\alpha,{\bf{x}},\mu}}\le C\|u\|_{\mathbb{Y}_{\alpha,{\bf{x}},\mu}}.\]
\end{lemma}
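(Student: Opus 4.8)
We sketch an argument in the spirit of \cite{LY1}. Index the $2k+1$ correction functions by $\Lambda:=\{0\}\cup\{(i,j)\ :\ 1\le i\le k,\ j=1,2\}$, and for $a\in\Lambda$ abbreviate $Y_0:=Y_{{\bf{x}},\mu,0}$, $Z_0:=Z_{{\bf{x}},\mu,0}$, $Y_{ij}:=Y_{x_i,\mu_i,j}$, $Z_{ij}:=Z_{x_i,\mu_i,j}$, so that $Z_a=-\Delta Y_a+h_\mu Y_a=-L_{{\bf{x}},\mu}Y_a+2h_\mu Y_a$. By definition $Q_{\bf{x},\mu}u=u-\sum_{a\in\Lambda}c_aZ_a$, where $c=(c_a)_{a\in\Lambda}$ is uniquely determined by the conditions $\int_\Omega Y_b\,Q_{\bf{x},\mu}u\,dx=0$ for $b\in\Lambda$; equivalently $\mathcal{A}c=\mathbf{b}$, where $\mathcal{A}_{ba}:=\int_\Omega Y_bZ_a\,dx$ and $\mathbf{b}_b:=\int_\Omega Y_b\,u\,dx$. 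Since the triangle inequality gives $\|Q_{\bf{x},\mu}u\|_{\mathbb{Y}_{\alpha,{\bf{x}},\mu}}\le\|u\|_{\mathbb{Y}_{\alpha,{\bf{x}},\mu}}+\sum_{a\in\Lambda}|c_a|\,\|Z_a\|_{\mathbb{Y}_{\alpha,{\bf{x}},\mu}}$, it suffices to (a) invert $\mathcal{A}$ with a quantitatively controlled inverse, (b) bound each $|\mathbf{b}_b|$ by $\|u\|_{\mathbb{Y}_{\alpha,{\bf{x}},\mu}}$ times a suitable power of $\mu$, and (c) bound each $\|Z_a\|_{\mathbb{Y}_{\alpha,{\bf{x}},\mu}}$, with the $\mu$-powers arranged so that $\sum_{a}|c_a|\,\|Z_a\|_{\mathbb{Y}_{\alpha,{\bf{x}},\mu}}\le C\|u\|_{\mathbb{Y}_{\alpha,{\bf{x}},\mu}}$.

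For (a), integration by parts (no boundary terms, by double periodicity) gives $\mathcal{A}_{ba}=\int_\Omega(\nabla Y_b\cdot\nabla Y_a+h_\mu Y_bY_a)\,dx$, so $\mathcal{A}$ is symmetric; then one rescales around each $x_i$ by $y=x_i+\mu_i^{-1}z$, using that $h_\mu=\mu_i^2e^{U(z)}$ on $B_{d_i}(x_i)$ with $U(z)=\ln\frac{8}{(1+|z|^2)^2}$, that the $Y_a$ restrict on $B_{d_i}(x_i)$ to (a constant plus) elements of the kernel of the Liouville linearization $-\Delta-e^U$, and Lemma \ref{kernelapp} for $L_{{\bf{x}},\mu}Y_a$ away from these model profiles. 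One finds $\mathcal{A}_{00}=c_*\,\mu^{-2}(1+o(1))$ and $\mathcal{A}_{(ij)(ij)}=c_{**}\,\mu_i^{2}(1+o(1))$ with $c_*,c_{**}>0$ universal, while every off-diagonal entry is of strictly lower order after division by $\sqrt{\mathcal{A}_{aa}\mathcal{A}_{bb}}$: the leading terms coupling distinct bubbles drop out because the supports of $Y_{ij}$ and $Z_{i'j'}$ are disjoint for $i\neq i'$, those coupling $Y_0$ with $Y_{ij}$, and $Y_{ij}$ with $Y_{ij'}$ for $j\neq j'$, vanish by the parity (oddness in $z$) of the translation kernels, and only the cut-off errors from the annuli $B_{2d_i}(x_i)\setminus B_{d_i}(x_i)$ — where $h_\mu\equiv0$ and $\nabla\chi_i,\Delta\chi_i=O(1)$ — survive. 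Hence $\mathcal{A}=D+R$ with $D:=\mathrm{diag}(\mathcal{A}_{aa})_{a\in\Lambda}$ and $\|D^{-1}R\|=o(1)$ uniformly in ${\bf{x}}$ and $\mu$, so $\mathcal{A}^{-1}=D^{-1}(I+o(1))$ and in particular $|c_0|\le C\mu^{2}|\mathbf{b}_0|+(\text{lower order})$ and $|c_{ij}|\le C\mu_i^{-2}|\mathbf{b}_{ij}|+(\text{lower order})$.

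For (b) and (c), apply the Cauchy--Schwarz inequality in the weighted $\mathbb{Y}_{\alpha,{\bf{x}},\mu}$-norm after the same rescaling; the weight $\rho(z)=(1+|z|)^{1+\frac\alpha2}$ is chosen precisely so that $\rho^{-1}$ times the rescaled profiles of $Y_0,Y_{ij},Z_0,Z_{ij}$ all lie in $L^2(\R^2)$. Carrying the powers of $\mu_i$ through the change of variables, and observing that the $L^2(\Omega\setminus\Omega')$ contributions (supported on the cut-off annuli) are of strictly lower order in every case, yields $|\mathbf{b}_0|\le C\mu^{-1}\|u\|_{\mathbb{Y}_{\alpha,{\bf{x}},\mu}}$, $|\mathbf{b}_{ij}|\le C\mu_i\|u\|_{\mathbb{Y}_{\alpha,{\bf{x}},\mu}}$, $\|Z_0\|_{\mathbb{Y}_{\alpha,{\bf{x}},\mu}}\le C\mu^{-1}$ and $\|Z_{ij}\|_{\mathbb{Y}_{\alpha,{\bf{x}},\mu}}\le C\mu_i$. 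Combining with (a) gives $|c_0|\le C\|u\|_{\mathbb{Y}_{\alpha,{\bf{x}},\mu}}$ and $|c_{ij}|\le C\mu_i^{-1}\|u\|_{\mathbb{Y}_{\alpha,{\bf{x}},\mu}}$, whence $|c_0|\,\|Z_0\|_{\mathbb{Y}_{\alpha,{\bf{x}},\mu}}+\sum_{i,j}|c_{ij}|\,\|Z_{ij}\|_{\mathbb{Y}_{\alpha,{\bf{x}},\mu}}\le C\|u\|_{\mathbb{Y}_{\alpha,{\bf{x}},\mu}}$, and the assertion follows.

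The main obstacle is the off-diagonal control in (a): one must show that, measured in the correctly $\mu$-weighted norm, the coupling between distinct bubbles and between the dilation and translation modes is genuinely $o(1)$, uniformly over all admissible vortex positions ${\bf{x}}\in\Omega^{(k)}$ — which may cluster together or drift toward the $p_i$ — and over the whole range $\mu\in[\beta_0/\sqrt\e,\beta_1/\sqrt\e]$. This relies on the precise decay built into the definitions of $Y_0$ and $Y_{ij}$ and on the smallness of the cut-off errors, and is exactly the computation performed in \cite[Lemma 3.1]{LY1}, which we may invoke.
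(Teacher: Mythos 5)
The paper offers no proof of this lemma at all: it is quoted verbatim from \cite[Lemma 3.1]{LY1}, so there is nothing internal to compare your argument against. Your sketch is a correct reconstruction of the standard projection estimate that Lin--Yan actually carry out — setting up the Gram system $\mathcal{A}c=\mathbf{b}$, showing $\mathcal{A}$ is diagonally dominant after rescaling (with the off-diagonal terms killed by disjoint supports, parity, and cut-off smallness), and balancing the $\mu$-powers in $|\mathbf{b}_a|$, $|c_a|$, and $\|Z_a\|_{\mathbb{Y}_{\alpha,{\bf{x}},\mu}}$ — and in the end you, like the author, defer the quantitative off-diagonal computation to \cite{LY1}, so the two treatments are essentially the same. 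One small slip: from your own bounds $|c_0|\le C\mu^{2}|\mathbf{b}_0|$ and $|\mathbf{b}_0|\le C\mu^{-1}\|u\|_{\mathbb{Y}_{\alpha,{\bf{x}},\mu}}$ you get $|c_0|\le C\mu\|u\|_{\mathbb{Y}_{\alpha,{\bf{x}},\mu}}$, not $C\|u\|_{\mathbb{Y}_{\alpha,{\bf{x}},\mu}}$; this is harmless because the quantity that enters the final estimate is the product $|c_0|\,\|Z_0\|_{\mathbb{Y}_{\alpha,{\bf{x}},\mu}}\le C\mu\cdot C\mu^{-1}\|u\|_{\mathbb{Y}_{\alpha,{\bf{x}},\mu}}=C\|u\|_{\mathbb{Y}_{\alpha,{\bf{x}},\mu}}$, exactly as needed.
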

The following lemma will be useful for our arguments.
\begin{lemma}\label{tildeU}
  $\frac{1}{{\e}^2}e^{\tilde{W}_{\bf{x},\mu}}=O(\sum_{i=1}^ke^{u_{x_i,\mu_i}}1_{B_{d_i}}(x_i)+O({\e})(1-\sum_{i=1}^k1_{B_{d_i}(x_i)})).$
\end{lemma}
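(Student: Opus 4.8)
\emph{Proof proposal.} The plan is to reduce the estimate, via the explicit formula for $c(w_{\mathbf{x},\mu})$, to elementary pointwise bounds on $w_{\mathbf{x},\mu}^{*}$ together with a crude lower bound on $\int_{\Omega}e^{u_{0}+w_{\mathbf{x},\mu}^{*}}dz$. The only quantitative input is that, uniformly for $\mu$ in the admissible range and for $\mathbf{x}$ in a fixed compact neighbourhood of $\mathbf{q}$ in $\Omega^{(k)}$, one has $\rho_{i}\asymp 1$ and hence $\mu_{i}^{2}=\tfrac{\rho_{1}}{\rho_{i}}\mu^{2}\asymp\varepsilon^{-1}$; consequently the products $\prod_{j\neq i}\mu_{j}^{-2}$ are all comparable to each other (indeed $\big(\prod_{j\neq i}\mu_{j}^{-2}\big)\big/\big(\prod_{j\neq 1}\mu_{j}^{-2}\big)=\mu_{i}^{2}/\mu_{1}^{2}=\rho_{1}/\rho_{i}$), while $\prod_{j}\mu_{j}^{-2}=\mu_{1}^{-2}\prod_{j\neq 1}\mu_{j}^{-2}=O(\varepsilon)\prod_{j\neq 1}\mu_{j}^{-2}$.

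\textbf{Step 1 (reduction).} From the definition of $c(w_{\mathbf{x},\mu})$, the second factor in its denominator has the form $1+\sqrt{\,\cdot\,}\ge 1$, so for $\varepsilon$ small (when the radicand is nonnegative, which is part of the validity of that formula) one has $e^{c(w_{\mathbf{x},\mu})}\le 16k\pi\varepsilon^{2}\big/\int_{\Omega}e^{u_{0}+w_{\mathbf{x},\mu}}dy$. Since $\tilde{W}_{\mathbf{x},\mu}=c(w_{\mathbf{x},\mu})+w_{\mathbf{x},\mu}^{*}-\int_{\Omega}w_{\mathbf{x},\mu}^{*}$ and $\int_{\Omega}e^{u_{0}+w_{\mathbf{x},\mu}}dy=e^{-\int_{\Omega}w_{\mathbf{x},\mu}^{*}}\int_{\Omega}e^{u_{0}+w_{\mathbf{x},\mu}^{*}}dy$, the factor $e^{-\int_{\Omega}w_{\mathbf{x},\mu}^{*}}$ cancels, leaving
\[
\frac{1}{\varepsilon^{2}}e^{\tilde{W}_{\mathbf{x},\mu}(y)}\le\frac{16k\pi\,e^{w_{\mathbf{x},\mu}^{*}(y)}}{\int_{\Omega}e^{u_{0}+w_{\mathbf{x},\mu}^{*}(z)}\,dz}.
\]
Thus it suffices to bound $e^{w_{\mathbf{x},\mu}^{*}}$ pointwise from above and $\int_{\Omega}e^{u_{0}+w_{\mathbf{x},\mu}^{*}}$ from below.

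\textbf{Step 2 (bounds on $w_{\mathbf{x},\mu}^{*}$).} The identity $1+\mu_{j}^{2}d_{j}^{2}=d\mu_{j}^{2}$ (from $d_{j}^{2}=d-\mu_{j}^{-2}$) gives $u_{0,\mu_{j}}(d_{j})=\ln\frac{8}{d^{2}\mu_{j}^{2}}$, hence $e^{u_{0,\mu_{j}}(d_{j})}=\frac{8}{d^{2}\mu_{j}^{2}}=O(\varepsilon)$. Since $\gamma$ and the regular part of $G$ are bounded on $\bar{\Omega}\times\bar{\Omega}$ and $1-(d\mu_{j}^{2})^{-1}\in(0,1]$, inspecting the piecewise definition of $w_{x_{j},\mu_{j}}^{*}$ yields: on $B_{d_{i}}(x_{i})$ --- which, for $d$ small, is disjoint from the other $B_{d_{j}}(x_{j})$ and stays uniformly away from the vortices --- $w_{\mathbf{x},\mu}^{*}(y)=u_{x_{i},\mu_{i}}(y)+\sum_{j\neq i}u_{0,\mu_{j}}(d_{j})+O(1)$, so $e^{w_{\mathbf{x},\mu}^{*}(y)}=O\big(\prod_{j\neq i}\mu_{j}^{-2}\big)\,e^{u_{x_{i},\mu_{i}}(y)}$; whereas on $\Omega\setminus\Omega'$, using that $G(y,x_{j})-\frac{1}{2\pi}\ln\frac{1}{d_{j}}=-\frac{1}{2\pi}\ln\frac{|y-x_{j}|}{d_{j}}+\gamma(y,x_{j})\le C$ there, $w_{\mathbf{x},\mu}^{*}(y)\le\sum_{j}u_{0,\mu_{j}}(d_{j})+C'$, so $e^{w_{\mathbf{x},\mu}^{*}(y)}=O\big(\prod_{j}\mu_{j}^{-2}\big)$. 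For the lower bound I would keep only the first bubble: on $B_{d_{1}}(x_{1})$, $u_{0}$ is bounded below, hence $e^{u_{0}+w_{\mathbf{x},\mu}^{*}}\ge c\,e^{u_{x_{1},\mu_{1}}(y)}\prod_{j\neq 1}\mu_{j}^{-2}$, and since $\int_{B_{d_{1}}(x_{1})}e^{u_{x_{1},\mu_{1}}}dy=8\pi-\frac{4\pi}{d\mu_{1}^{2}}\ge 4\pi$ for $\varepsilon$ small, integrating gives $\int_{\Omega}e^{u_{0}+w_{\mathbf{x},\mu}^{*}}dz\ge c\prod_{j\neq 1}\mu_{j}^{-2}$.

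\textbf{Step 3 (conclusion and the main difficulty).} Inserting the three bounds of Step 2 into the inequality of Step 1 and using the comparabilities from the first paragraph, on $B_{d_{i}}(x_{i})$ one gets
\[
\frac{1}{\varepsilon^{2}}e^{\tilde{W}_{\mathbf{x},\mu}}\le C\,\frac{\prod_{j\neq i}\mu_{j}^{-2}}{\prod_{j\neq 1}\mu_{j}^{-2}}\,e^{u_{x_{i},\mu_{i}}}=C\,\frac{\rho_{1}}{\rho_{i}}\,e^{u_{x_{i},\mu_{i}}}=O\big(e^{u_{x_{i},\mu_{i}}}\big),
\]
and on $\Omega\setminus\Omega'$
\[
\frac{1}{\varepsilon^{2}}e^{\tilde{W}_{\mathbf{x},\mu}}\le C\,\frac{\prod_{j}\mu_{j}^{-2}}{\prod_{j\neq 1}\mu_{j}^{-2}}=C\mu_{1}^{-2}=O(\varepsilon).
\]
Since $\Omega'=\bigcup_{i=1}^{k}B_{d_{i}}(x_{i})$ is a disjoint union we have $1-\sum_{i=1}^{k}1_{B_{d_{i}}(x_{i})}=1_{\Omega\setminus\Omega'}$, so the two displays combine to precisely the asserted estimate. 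I do not expect a genuine obstacle here: the argument is an explicit computation, and the only delicate point is the uniformity of every $O(1)$ above --- that the contributions of $\gamma$, of $G(\cdot,x_{j})$, and of $u_{0}$ on each region are bounded (above and, where needed, below) independently of $\varepsilon$, $\mu$ and $\mathbf{x}$ --- which is exactly where one uses that the centres $x_{i}$ stay away from the vortex points $\{p_{i}\}$ and from one another.
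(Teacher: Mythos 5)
Your proof is correct and follows essentially the same route as the paper's: both arguments reduce the claim to power counting in $\mu$ separately on each $B_{d_i}(x_i)$ and on $\Omega\setminus\Omega'$, reading off the asymptotics from the piecewise definition of $w^{\ast}_{x_j,\mu_j}$ and the identity $1+\mu_j^2 d_j^2=d\mu_j^2$. The only difference is organizational: where the paper imports the estimates $-\int_\Omega w^{\ast}_{x_i,\mu_i}\,dy=2\ln\mu_i+O(1)$ and $c(w_{\mathbf{x},\mu})=-6\ln\mu+O(1)$ from Lin--Yan, you bound $e^{c(w_{\mathbf{x},\mu})}$ directly from its explicit formula via $1+\sqrt{\,\cdot\,}\ge 1$ and arrange the computation so that $e^{-\int_\Omega w^{\ast}_{\mathbf{x},\mu}}$ cancels, which makes the argument self-contained (your value $\int_{B_{d_1}(x_1)}e^{u_{x_1,\mu_1}}=8\pi-\frac{4\pi}{d\mu_1^2}$ should be $8\pi-\frac{8\pi}{d\mu_1^2}$, but this is immaterial to the lower bound you need).
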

\begin{proof}  On $B_{d_i}(x_i)$, we see that
\begin{equation*}\begin{aligned}\frac{1}{{\e}^2}e^{\tilde{W}_{\bf{x},\mu}}&=\frac{1}{{\e}^2}e^{w_{\bf{x},\mu}^{\ast}-\int_{\Omega}w_{\bf{x},\mu}^{\ast}(z)dz
+c\left(  w_{\bf{x},\mu}\right)}\\&=\frac{1}{{\e}^2}e^{u_{x_{i},\mu_{i}}\left(  y\right) +\sum_{j\neq i}u_{0,\mu_{j}}\left(  d_j\right)+\Gamma_i-\int_{\Omega}w_{\bf{x},\mu}^{\ast}(z)dz
+c\left(  w_{\bf{x},\mu}\right)  },\end{aligned}\end{equation*} where $\Gamma_i:=8\pi\Big[\gamma \left(  y,x_{i}\right) \left(
1-\frac{1}{{d} \mu_{i}^{2}}\right) +\sum_{j\neq i}( {G}_{{\bf{x}},\mu}\left(  y,x_{j}\right)+\frac{1}{2\pi}\ln d_j) \left(
1-\frac{1}{{d} \mu_{j}^{2}}\right) \Big].$
In the proof of \cite[Proposition 2.1]{LY1}, the following estimations were obtained (see   the estimations (2.13) and  (2.22) in \cite{LY1}):
\[-\int_\Omega w^*_{x_i,\mu_i}(y)dy=2\ln\mu_i+O(1), \quad c(w_{\bf{x},\mu})=-6\ln\mu+O(1).\]
Moreover, $u_{0,\mu_{j}}(d_j)=O(\ln\frac{1}{\mu_i^2})$ on  $\Omega\setminus[B_{d_j}(x_j)]$ and $\mu_i=O(\frac{1}{\sqrt{{\e}}})$ for all $i=1,...,k$.
Thus we get that
\begin{equation*}\begin{aligned}\frac{1}{{\e}^2}e^{\tilde{W}_{\bf{x},\mu}}&= \frac{1}{{\e}^2}e^{u_{x_{i},\mu_{i}}\left(  y\right)}O(\mu^{-2(k-1)+2k-6})=O(e^{u_{x_{i},\mu_{i}}\left(  y\right)}) \ \ \textrm{on}\ \ B_{d_i}(x_i).\end{aligned}\end{equation*}
Similarly we get that \begin{equation*}\begin{aligned}\frac{1}{{\e}^2}e^{\tilde{W}_{\bf{x},\mu}}&=O({\e})\ \ \textrm{on}\ \ \Omega\setminus[\cup_{i}B_{d_i}(x_i)].\end{aligned}\end{equation*}
 \end{proof}

The following invertibility result  for the operator  $Q_{\bf{x},\mu}  L_{{\bf{x}},\mu} $ obtained in \cite{LY1}  is essential for our arguments:
\begin{theorem}\cite[Theorem A.2]{LY1} \label{pp2}
The operator $Q_{\bf{x},\mu}  L_{{\bf{x}},\mu} $ is an isomorphism from $E_{\bf{x},\mu}$ to $F_{\bf{x},\mu}$. Moreover,
if $w\in E_{\bf{x},\mu}$ and $h\in F_{\bf{x},\mu}$ satisfy
\[Q_{\bf{x},\mu}  L_{{\bf{x}},\mu}w=h,\] then there is a constant $C>0$, independent of $\bf{x}$ and $\mu$, such that
\[\|w\|_{\LI}+\|w\|_{\mathbb{X}_{\alpha,{\bf{x}},\mu}}\le C\ln\mu\|h\|_{\mathbb{Y}_{\alpha,{\bf{x}},\mu}}.\]
 \end{theorem}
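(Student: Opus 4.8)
The plan is to adapt the argument of \cite[Theorem A.2]{LY1}. The heart of the theorem is the a priori estimate: once one knows that every $w\in E_{\bf{x},\mu}$ with $Q_{\bf{x},\mu}L_{\bf{x},\mu}w=h\in F_{\bf{x},\mu}$ satisfies $\|w\|_{\LI}+\|w\|_{\mathbb{X}_{\alpha,{\bf{x}},\mu}}\le C\ln\mu\,\|h\|_{\mathbb{Y}_{\alpha,{\bf{x}},\mu}}$ with $C$ independent of $({\bf{x}},\mu)$, then $Q_{\bf{x},\mu}L_{\bf{x},\mu}$ is injective, and surjectivity follows by a Fredholm argument: $L_{\bf{x},\mu}=\Delta+h_\mu$ with $\Delta$ an index-zero Fredholm map between the weighted spaces (modulo constants) and $h_\mu$ a compact perturbation (after rescaling it is bounded and compactly supported), while $Q_{\bf{x},\mu}$ is a finite-rank correction by Lemma \ref{pp1} matched exactly by the codimension of $E_{\bf{x},\mu}$; hence $Q_{\bf{x},\mu}L_{\bf{x},\mu}:E_{\bf{x},\mu}\to F_{\bf{x},\mu}$ is Fredholm of index zero, and injectivity upgrades it to an isomorphism. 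Thus the whole proof reduces to the a priori estimate.

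I would prove the estimate by contradiction and blow-up. Assume there are $\mu_n\to\infty$, ${\bf{x}}_n$, $w_n\in E_{{\bf{x}}_n,\mu_n}$, $h_n\in F_{{\bf{x}}_n,\mu_n}$ with $Q_{{\bf{x}}_n,\mu_n}L_{{\bf{x}}_n,\mu_n}w_n=h_n$, $\|w_n\|_{\LI}+\|w_n\|_{\mathbb{X}_{\alpha,{\bf{x}}_n,\mu_n}}=1$, but $\ln\mu_n\,\|h_n\|_{\mathbb{Y}_{\alpha,{\bf{x}}_n,\mu_n}}\to0$. By the definition of $Q_{{\bf{x}}_n,\mu_n}$, $w_n$ solves
\[\Delta w_n+h_{\mu_n}w_n=h_n+c_{0,n}Z_{{\bf{x}}_n,\mu_n,0}+\sum_{i,j}c_{ij,n}Z_{x_{i,n},\mu_{i,n},j}.\]
Testing this against $Y_{{\bf{x}}_n,\mu_n,0}$ and the $Y_{x_{i,n},\mu_{i,n},j}$, integrating by parts, and using Lemma \ref{kernelapp} together with the (after rescaling) uniform invertibility of the matrix $\big(\int_\Omega Z_aY_b\big)$, one bounds the multipliers $c_{0,n},c_{ij,n}$ by $\|h_n\|_{\mathbb{Y}}$ and $o(1)\|w_n\|$, so the correction terms are negligible at the relevant scales.

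Then comes the two-scale analysis. \emph{Inner region}: around each $x_{i,n}$ set $\hat{w}_{i,n}(z):=w_n(x_{i,n}+\mu_{i,n}^{-1}z)$. From the definition of $\mathbb{X}_{\alpha,{\bf{x}},\mu}$, $\mathbb{Y}_{\alpha,{\bf{x}},\mu}$ and interior elliptic estimates, $\hat{w}_{i,n}$ is locally bounded in $C^1$ and, along a subsequence, $\hat{w}_{i,n}\to\hat{w}_i$ in $C^1_{loc}(\R^2)$ with $\Delta\hat{w}_i+\frac{8}{(1+|z|^2)^2}\hat{w}_i=0$ in $\R^2$; the weight $\hat\rho$ (with $\alpha<\tfrac12$) is designed precisely so that the second, logarithmically growing radial solution is excluded, so $\hat{w}_i$ is bounded and, by the non-degeneracy of the standard Liouville bubble (Baraket--Pacard, Chen--Lin), lies in $\mathrm{span}\{\frac{1-|z|^2}{1+|z|^2},\frac{z_1}{1+|z|^2},\frac{z_2}{1+|z|^2}\}$. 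Passing the orthogonality conditions $\int_\Omega Z_{{\bf{x}}_n,\mu_n,0}w_n=\int_\Omega Z_{x_{i,n},\mu_{i,n},j}w_n=0$ to the limit — each $Z$ concentrates at the bubbles against exactly the corresponding dilation/translation kernel direction — forces $\hat{w}_i\equiv0$ for every $i$. \emph{Outer region}: on $\Omega\setminus\cup_iB_{d_i}(x_{i,n})$ one has $h_{\mu_n}\equiv0$, so $\Delta w_n$ equals $h_n$ plus the negligible correction and tends to $0$ in $L^2_{loc}(\Omega\setminus\{x_1,\dots,x_k\})$; since $\|w_n\|_{\LI}\le1$, $w_n\to w_\ast$ with $\Delta w_\ast=0$ on $\Omega\setminus\{x_i\}$ and $w_\ast$ bounded, hence (removable singularities and periodicity) $w_\ast$ is a constant. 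This residual constant is detected by $Y_{{\bf{x}},\mu,0}$, whose far-field value $-1/\mu_1+\cdots$ encodes precisely the constant shift, and combining its orthogonality relation with the vanishing of the $\hat{w}_i$ in the necks forces $w_\ast\equiv0$. A bootstrap then gives $\|w_n\|_{\LI}+\|w_n\|_{\mathbb{X}}\to0$, contradicting the normalization. Tracking this final step shows that the unavoidable loss incurred when inverting along the near-kernel direction $Y_{{\bf{x}},\mu,0}$ is exactly a factor $\ln\mu$, which is the source of the $\ln\mu$ in the statement.

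The main obstacle is this quantitative point: obtaining the \emph{sharp} factor $\ln\mu$ — rather than a power of $\mu$ — uniformly in $({\bf{x}},\mu)$. It forces a careful inner/outer matching and full use of the precise form of the extra approximate kernel $Y_{{\bf{x}},\mu,0}$, which has no analogue in the pure Liouville problem and carries most of the technical weight; a secondary difficulty is checking that the orthogonality constraints defining $E_{\bf{x},\mu}$ survive the blow-up and annihilate the entire three-dimensional bubble kernel without defect. The remaining ingredients — the Fredholm reduction, the weighted interior estimates, and the classification of the limiting kernel — are by now standard, and for the details we follow \cite{LY1}.
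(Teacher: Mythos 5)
The paper does not prove this statement at all: Theorem \ref{pp2} is imported verbatim from \cite[Theorem A.2]{LY1} as a black box, so there is no in-paper argument to compare yours against. Your sketch --- Fredholm reduction to an a priori estimate, the estimate proved by contradiction with a two-scale blow-up, the orthogonality constraints annihilating the limiting kernel, and the $\ln\mu$ loss traced to the near-kernel direction $Y_{{\bf{x}},\mu,0}$ --- is faithful in outline to the strategy used in the appendix of \cite{LY1}, so as a reconstruction it is reasonable. One point you pass over too quickly, and which carries most of the actual weight: $E_{{\bf{x}},\mu}$ imposes only $2k+1$ orthogonality conditions, whereas the naive limiting kernel produced by your inner analysis has $3k$ bubble directions (plus the outer constant); the $2k$ translation conditions match up one-to-one, but there is only a single dilation condition $\int_\Omega Z_{{\bf{x}},\mu,0}\,w=0$ against $k$ a priori independent dilation components of the limits $\hat{w}_i$. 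These can only be killed simultaneously because the outer harmonic matching across the necks forces the $k$ dilation coefficients to be proportional with exactly the ratios $\sqrt{\rho_1/\rho_i}$ built into $Y_{{\bf{x}},\mu,0}$; establishing that linkage quantitatively is also where the sharp factor $\ln\mu$ (rather than a power of $\mu$) is won. So what you call a secondary difficulty is in fact the central one, and any self-contained writeup would need to supply it; for the purposes of this paper, though, citing \cite{LY1} as the author does is the intended treatment.
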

We define $\tilde{g}_{{\bf{x}},\mu}\left(
\eta\right)  $ as
\[
\tilde{g}_{{\bf{x}},\mu}\left(  \eta\right)  :=h_{\mu}\left(  y\right)
\eta-\frac{1}{{\varepsilon}^{2}}e^{u_0+\tilde{W}_{\bf{x},\mu}+\eta%
}\left(  1-e^{u_0+\tilde{W}_{\bf{x},\mu}+\eta}\right)  -\Delta \tilde
{W}_{\bf{x},\mu}+8k\pi.
\] The function  $\tilde{g}_{{\bf{x}},\mu}\left(
\eta\right)  $ was introduced in \cite{LY1}, and the following estimations were obtained:
\begin{lemma}\cite[Proposition 3.2]{LY1}\label{estg} There is an $\varepsilon_{0}>0$, such that for each $\varepsilon
\in(0,\varepsilon_{0}]$, $\bf{x}$ which is closed to $\bf{q}$ with $\left \vert DG^{\ast
}\left(  {\bf{x}}\right)  \right \vert \leq \frac{C}{\mu}$, and $\mu \in \lbrack
\frac{\beta_{0}}{\sqrt{\varepsilon}},\frac{\beta_{1}}{\sqrt{\varepsilon}}]$, if $\eta, \eta'\in E_{{\bf{x}},\mu}$ satisfies $\|\bar{\eta}\|_{L^\infty(\Omega)}+\|\bar{\eta}\|_{\mathbb{X}_{\alpha,{\bf{x}},\mu}}\le\frac{1}{\mu}$ where $\bar{\eta}=\eta, \eta'$, then we have
 \begin{equation}\label{p12}
\begin{aligned}\left \Vert \tilde{g}%
_{{\bf{x}},\mu}\left(  \eta\right)  \right \Vert _{\mathbb{Y}_{\alpha,{\bf{x}},\mu}}\leq \frac
{C}{\mu^{2-\frac{\alpha}{2}}},\end{aligned}\end{equation}
and
\begin{equation}
\left \Vert \tilde{g}_{{\bf{x}},\mu}\left(  \eta\right)  -\tilde{g}_{2,x,\mu
}\left(  \eta'\right)  \right \Vert _{\mathbb{Y}_{\alpha,{\bf{x}},\mu}}\leq \frac
{C}{\mu }\left \Vert \eta-\eta'\right \Vert _{L^{\infty
}\left(  \Omega \right)  }, \label{n1}%
\end{equation}where $C>0$ is a constant, independent of ${\bf{x}}, \mu, \eta, \eta'$. \end{lemma}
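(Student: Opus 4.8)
\emph{Proof proposal.} Since $\tilde g_{{\bf{x}},\mu}$ involves only the Chern--Simons nonlinearity $e^{u_0+\tilde W_{{\bf{x}},\mu}+\eta}\big(1-e^{u_0+\tilde W_{{\bf{x}},\mu}+\eta}\big)$ and not the implicit function $G$, the statement is identical to \cite[Proposition 3.2]{LY1}, and the plan is to reproduce that argument while making visible where Lemma \ref{tildeU} and the sharp expansions of $w^{*}_{{\bf{x}},\mu}$, $c(w_{{\bf{x}},\mu})$ and $\mu$ enter. Write $U:=u_0+\tilde W_{{\bf{x}},\mu}$ and $\Omega':=\cup_{i=1}^{k}B_{d_i}(x_i)$, and split
\[
\tilde g_{{\bf{x}},\mu}(\eta)=\tilde g_{{\bf{x}},\mu}(0)+\big(\tilde g_{{\bf{x}},\mu}(\eta)-\tilde g_{{\bf{x}},\mu}(0)\big),\qquad \tilde g_{{\bf{x}},\mu}(\eta)-\tilde g_{{\bf{x}},\mu}(0)=h_{\mu}\eta-\frace\big(e^{U+\eta}(1-e^{U+\eta})-e^{U}(1-e^{U})\big),
\]
so it suffices to bound $\tilde g_{{\bf{x}},\mu}(0)$ in $\|\cdot\|_{\mathbb{Y}_{\alpha,{\bf{x}},\mu}}$ and to prove the Lipschitz estimate \eqref{n1}; then \eqref{p12} follows from \eqref{n1} with $\eta'=0$. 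All estimates are carried out separately on $\Omega\setminus\Omega'$ and on each $B_{d_i}(x_i)$.

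\emph{Estimate of $\tilde g_{{\bf{x}},\mu}(0)$.} On $\Omega\setminus\Omega'$ one has $h_{\mu}\equiv0$; since $-\Delta_y G(y,x_j)=\delta_{x_j}-1$ (recall $|\Omega|=1$), a direct computation gives $-\Delta\tilde W_{{\bf{x}},\mu}+8k\pi=8\pi\sum_j(d\mu_j^2)^{-1}=O(\mu^{-2})$, while Lemma \ref{tildeU} together with $e^{u_0}\in L^{\infty}(\Omega)$ gives $\frace e^{U}(1-e^{U})=O(\e)=O(\mu^{-2})$, so $\|\tilde g_{{\bf{x}},\mu}(0)\|_{L^{2}(\Omega\setminus\Omega')}=O(\mu^{-2})$. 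On $B_{d_i}(x_i)$, using $-\Delta u_{x_i,\mu_i}=e^{u_{x_i,\mu_i}}$ and $\Delta_y\gamma(y,x_i)=1/|\Omega|=1$ one gets $-\Delta\tilde W_{{\bf{x}},\mu}+8k\pi=e^{u_{x_i,\mu_i}}+O(\mu^{-2})$, hence $\tilde g_{{\bf{x}},\mu}(0)=e^{u_{x_i,\mu_i}}-\frace e^{U}(1-e^{U})+O(\mu^{-2})$. Writing $U(y)=u_{x_i,\mu_i}(y)+\Phi_i(y)+A_i$ on $B_{d_i}(x_i)$, where $\Phi_i$ is smooth with $\Phi_i(x_i)=0$ (essentially $f_{{\bf{q}},i}$, built from $u_0$, $\gamma(\cdot,x_i)$ and $G(\cdot,x_j)$, $j\ne i$, each modified by the factor $1-\frac1{d\mu_i^2}$) and $A_i$ is a constant collecting $-\int_{\Omega}w^{*}_{{\bf{x}},\mu}$, $c(w_{{\bf{x}},\mu})$ and the $u_{0,\mu_j}(d_j)$, one inserts $-\int_{\Omega}w^{*}_{x_i,\mu_i}=2\ln\mu_i+O(1)$, $c(w_{{\bf{x}},\mu})=-6\ln\mu+O(1)$, $u_{0,\mu_j}(d_j)=-2\ln\mu_j+O(1)$ from \cite[Section 2]{LY1} and uses that the scaling $\mu_i=\sqrt{\rho_1/\rho_i}\,\mu$ makes the constants $A_i$ equal for all $i$ up to $O(\mu^{-2})$, and that $c(w_{{\bf{x}},\mu})$ is chosen precisely so that $\frace e^{A_i}=1+O(\mu^{-2})$ (and $\frace e^{2U}$ is of lower order), to obtain the pointwise expansion
\[
\frace e^{U}(1-e^{U})=e^{u_{x_i,\mu_i}}\big(1+\nabla\Phi_i(x_i)\cdot(y-x_i)+O(|y-x_i|^{2})+O(\mu^{-2})\big).
\]
Since ${\bf{x}}$ is near ${\bf{q}}$ with $|DG^{\ast}({\bf{x}})|\le C/\mu$ one has $|\nabla\Phi_i(x_i)|=O(\mu^{-1})$, so after rescaling $y=x_i+\mu^{-1}z$, which turns $e^{u_{x_i,\mu_i}}$ into a multiple of $\mu^{2}(1+|z|^{2})^{-2}$, the residual $\tilde g_{{\bf{x}},\mu}(0)$ is dominated by $e^{u_{x_i,\mu_i}}O(|y-x_i|^{2})\sim|z|^{2}(1+|z|^{2})^{-2}$; pairing this against the weight $\rho(z)^{2}=(1+|z|)^{2+\alpha}$ and the prefactor $\mu_i^{-4}$ in $\|\cdot\|_{\mathbb{Y}_{\alpha,{\bf{x}},\mu}}$ gives $\mu_i^{-4}\int_{|z|\le C\mu}|z|^{4}(1+|z|^{2})^{-4}(1+|z|)^{2+\alpha}\,dz\sim\mu^{\alpha-4}$, while all remaining pieces ($O(\mu^{-2})$ constants, the linear term, the part on $\Omega\setminus\Omega'$) contribute $O(\mu^{-2})\le C\mu^{-(2-\alpha/2)}$; hence $\|\tilde g_{{\bf{x}},\mu}(0)\|_{\mathbb{Y}_{\alpha,{\bf{x}},\mu}}\le C\mu^{-(2-\alpha/2)}$.

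\emph{Lipschitz estimate \eqref{n1} and conclusion.} By the mean value theorem, $e^{U+\eta}(1-e^{U+\eta})-e^{U+\eta'}(1-e^{U+\eta'})=(\eta-\eta')\,e^{U+\xi}(1-2e^{U+\xi})$ for some $\xi=\xi(y)$ between $\eta$ and $\eta'$, so that $\tilde g_{{\bf{x}},\mu}(\eta)-\tilde g_{{\bf{x}},\mu}(\eta')=(\eta-\eta')\big(h_{\mu}-\frace e^{U+\xi}(1-2e^{U+\xi})\big)$. Since $\|\xi\|_{L^{\infty}(\Omega)}\le\mu^{-1}$, the expansion above and Lemma \ref{tildeU} give $h_{\mu}-\frace e^{U+\xi}(1-2e^{U+\xi})=e^{u_{x_i,\mu_i}}\big(1-\frace e^{\Phi_i+A_i+\xi}+\cdots\big)=O\!\big(e^{u_{x_i,\mu_i}}(|y-x_i|+\mu^{-1})\big)$ on $B_{d_i}(x_i)$ and $O(\e)$ on $\Omega\setminus\Omega'$; a short weighted computation (using $\mu_i^{-4}\int_{|z|\le C\mu}\mu^{2}|z|^{2}(1+|z|^{2})^{-4}(1+|z|)^{2+\alpha}\,dz=O(\mu^{-2})$) shows this factor has $\mathbb{Y}_{\alpha,{\bf{x}},\mu}$-norm $O(\mu^{-1})$, and since $\eta-\eta'$ enters in $L^{\infty}(\Omega)$, \eqref{n1} follows. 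Taking $\eta'=0$ yields $\|\tilde g_{{\bf{x}},\mu}(\eta)-\tilde g_{{\bf{x}},\mu}(0)\|_{\mathbb{Y}_{\alpha,{\bf{x}},\mu}}\le C\mu^{-1}\|\eta\|_{L^{\infty}(\Omega)}\le C\mu^{-2}$, which combined with the previous step gives \eqref{p12}.

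\emph{Main obstacle.} The delicate point is the weighted near-bubble estimate of $\tilde g_{{\bf{x}},\mu}(0)$: one must track the cancellation among $u_{x_i,\mu_i}$, the regular parts $8\pi\gamma(y,x_i)(1-\frac1{d\mu_i^{2}})$, the interaction terms with $j\ne i$, the normalizers $-\int_{\Omega}w^{*}_{{\bf{x}},\mu}$ and $c(w_{{\bf{x}},\mu})$, and the singular factor $\e^{-2}\sim\mu^{4}$, down to order $\mu^{-2}$ \emph{including} the quadratic growth in $|y-x_i|$, and then check that this growth is still square-integrable against $\rho^{2}e^{u_{x_i,\mu_i}}$ up to the cutoff scale $|z|\sim\mu$, which is what produces exactly the rate $\mu^{-(2-\alpha/2)}$. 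Everything else --- the region splitting, the mean value theorem, and the elementary $L^{2}$ bounds on $\Omega\setminus\Omega'$ --- is routine once Lemma \ref{tildeU} and the expansions of \cite[Section 2]{LY1} are available.
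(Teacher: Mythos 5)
Your proposal is correct and follows the same route the paper relies on: the paper does not prove this lemma at all but simply cites \cite[Proposition 3.2]{LY1} (specifically (3.16) and (3.21)--(3.22) there), and your argument is a faithful reconstruction of that Lin--Yan proof, with the region splitting, the cancellation of the constant via $c(w_{\bf{x},\mu})$, the use of $|DG^{\ast}({\bf{x}})|\le C/\mu$ to kill the linear term, and the weighted integral producing exactly the rate $\mu^{-(2-\frac{\alpha}{2})}$ all as in \cite{LY1}. No discrepancy with the paper's (cited) proof.
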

\begin{proof} See  \cite[(3.16)]{LY1} for the estimation \eqref{p12} and \cite[(3.21),(3,22)]{LY1} for the estimation \eqref{n1}.
\end{proof}
We remind that  ${\bf{q}}$ is a non-degenerate critical point of $G^{\ast}\left(  \bf{q} \right)  $ with  $D\left(  \bf{q}\right)  <0$. Now we have the following proposition.
\begin{proposition}\label{pp}
 There is an $\varepsilon_{0}>0$, such that for each $\varepsilon
\in(0,\varepsilon_{0}]$, $\bf{x}$ which is closed to $\bf{q}$ with $\left \vert DG^{\ast
}\left(  {\bf{x}}\right)  \right \vert \leq \frac{C}{\mu}$, and $\mu \in \lbrack
\frac{\beta_{0}}{\sqrt{\varepsilon}},\frac{\beta_{1}}{\sqrt{\varepsilon}}]$, there
exists $\eta_{{\bf{x}},\mu}\in
 E_{\bf{x},\mu}$  satisfying%
\begin{equation}\label{fix2}
Q_{\bf{x},\mu}(  L_{{\bf{x}},\mu}\left(  \eta_{{\bf{x}},\mu}\right))
=Q(g_{{\bf{x}},\mu}(\eta_{{\bf{x}},\mu}))
\end{equation}
Moreover,
\begin{align*}
\left \Vert \eta_{{\bf{x}},\mu}\right \Vert _{L^{\infty}}+\left \Vert \eta_{{\bf{x}},\mu
}\right \Vert _{\mathbb{X}_{\alpha,{\bf{x}},\mu}}   \leq \frac{C\ln \mu}{\mu^{2-\frac{\alpha}{2}}},\end{align*}
where $C>0$ is independent of $\e>0$. Here $\alpha\in(0,\frac{1}{2})$ is the same constant as in
$\mathbb{X}_{\alpha,{\bf{x}},\mu}$ and $\mathbb{Y}_{\alpha,{\bf{x}},\mu}$.
\end{proposition}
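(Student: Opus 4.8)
The plan is to solve the nonlinear equation \eqref{fix2} for $\eta_{{\bf{x}},\mu}$ by a fixed point argument in the space $E_{{\bf{x}},\mu}$. The key observation is that $g_{{\bf{x}},\mu}$ and $\tilde g_{{\bf{x}},\mu}$ differ only by harmless terms: since $g_{{\bf{x}},\mu}(\eta)=h_\mu\eta+8k\pi-\Delta\tilde W_{{\bf{x}},\mu}-\frace e^{G(1+u_0+\tilde W_{{\bf{x}},\mu}+\eta)}(1-e^{G(1+u_0+\tilde W_{{\bf{x}},\mu}+\eta)})^2$, and since, for $t$ near $-\infty$, $G(t)=t-1+e^{G(t)}$ with $e^{G(t)}$ exponentially small, one has $e^{G(1+u_0+\tilde W+\eta)}\approx e^{u_0+\tilde W+\eta}$ and $(1-e^{G(\cdots)})^2\approx 1-e^{u_0+\tilde W+\eta}+(\text{higher order})$, so $g_{{\bf{x}},\mu}(\eta)$ equals $\tilde g_{{\bf{x}},\mu}(\eta)$ plus a remainder that is of even higher order in $\mu^{-1}$ than the bound in \eqref{p12}. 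Using Lemma \ref{tildeU} to control $\frace e^{\tilde W_{{\bf{x}},\mu}}$ and the a priori smallness $\|\eta\|_{L^\infty}+\|\eta\|_{\mathbb{X}_{\alpha,{\bf{x}},\mu}}\le 1/\mu$, I would first show that $g_{{\bf{x}},\mu}(\eta)\in\mathbb{Y}_{\alpha,{\bf{x}},\mu}$ with $\|g_{{\bf{x}},\mu}(\eta)\|_{\mathbb{Y}_{\alpha,{\bf{x}},\mu}}\le C\mu^{-(2-\frac{\alpha}{2})}$, and likewise a Lipschitz estimate $\|g_{{\bf{x}},\mu}(\eta)-g_{{\bf{x}},\mu}(\eta')\|_{\mathbb{Y}_{\alpha,{\bf{x}},\mu}}\le C\mu^{-1}\|\eta-\eta'\|_{L^\infty(\Omega)}$, inheriting both from Lemma \ref{estg}.

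Next I would set up the contraction. By Theorem \ref{pp2}, the operator $Q_{{\bf{x}},\mu}L_{{\bf{x}},\mu}:E_{{\bf{x}},\mu}\to F_{{\bf{x}},\mu}$ is an isomorphism with inverse of norm $\le C\ln\mu$. Hence \eqref{fix2} is equivalent to the fixed point equation $\eta=(Q_{{\bf{x}},\mu}L_{{\bf{x}},\mu})^{-1}\big(Q_{{\bf{x}},\mu}(g_{{\bf{x}},\mu}(\eta))\big)=:T_{{\bf{x}},\mu}(\eta)$. I would work on the closed ball $\mathcal{B}:=\{\eta\in E_{{\bf{x}},\mu}:\|\eta\|_{L^\infty}+\|\eta\|_{\mathbb{X}_{\alpha,{\bf{x}},\mu}}\le \frac{C_0\ln\mu}{\mu^{2-\frac{\alpha}{2}}}\}$ for a suitable large constant $C_0$. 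Using Lemma \ref{pp1} (boundedness of $Q_{{\bf{x}},\mu}$), Theorem \ref{pp2}, and the bound on $g_{{\bf{x}},\mu}$, one gets $\|T_{{\bf{x}},\mu}(\eta)\|_{L^\infty}+\|T_{{\bf{x}},\mu}(\eta)\|_{\mathbb{X}_{\alpha,{\bf{x}},\mu}}\le C\ln\mu\cdot C\cdot C\mu^{-(2-\frac{\alpha}{2})}\le \frac{C_0\ln\mu}{\mu^{2-\frac{\alpha}{2}}}$, so $T_{{\bf{x}},\mu}$ maps $\mathcal{B}$ into itself for $\e$ small; note that the ball radius $\frac{C_0\ln\mu}{\mu^{2-\frac{\alpha}{2}}}$ is indeed $\le 1/\mu$ for $\e$ small, so Lemma \ref{estg} applies throughout $\mathcal{B}$. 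For the contraction, the Lipschitz estimate gives $\|T_{{\bf{x}},\mu}(\eta)-T_{{\bf{x}},\mu}(\eta')\|_{L^\infty}\le C\ln\mu\cdot C\cdot C\mu^{-1}\|\eta-\eta'\|_{L^\infty(\Omega)}=o(1)\|\eta-\eta'\|_{L^\infty(\Omega)}$, which is a contraction in the $L^\infty$ norm for $\e$ small. Since $\mathcal{B}$ is complete in $L^\infty$ and $T_{{\bf{x}},\mu}(\mathcal{B})\subset\mathcal{B}$ is bounded in the stronger norm, the Banach fixed point theorem yields a unique $\eta_{{\bf{x}},\mu}\in\mathcal{B}$ with $T_{{\bf{x}},\mu}(\eta_{{\bf{x}},\mu})=\eta_{{\bf{x}},\mu}$, which is the desired solution of \eqref{fix2}, and it automatically satisfies the claimed bound.

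The main obstacle I anticipate is not the abstract fixed point scheme but verifying that $g_{{\bf{x}},\mu}$ genuinely inherits the estimates \eqref{p12} and \eqref{n1} of $\tilde g_{{\bf{x}},\mu}$ despite the presence of the implicit function $G$. Concretely, one must quantify $G(t)-(t-1)$, $1-e^{G(t)}$, and the derivative $G'(t)=\frac{1}{1-e^{G(t)}}$ uniformly for $t$ in the relevant range (which includes the regime where $1+u_0+\tilde W_{{\bf{x}},\mu}+\eta$ becomes very negative outside the bubbles, and stays bounded-from-below only near the bubble cores), and then propagate these pointwise bounds through the weighted $\mathbb{Y}_{\alpha,{\bf{x}},\mu}$ norm. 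The decomposition $\frace e^{G(w)}(1-e^{G(w)})^2 = \frace e^{w}(1-e^{w}) + \frace e^{w}\big[(1-e^{G(w)})^2 e^{G(w)-w}-(1-e^{w})\big]$ with $w=1+u_0+\tilde W_{{\bf{x}},\mu}+\eta$ isolates exactly the difference $g_{{\bf{x}},\mu}(\eta)-\tilde g_{{\bf{x}},\mu}(\eta)$; using $G(w)-w=e^{G(w)}-1$ one checks the bracket is $O(e^{2w})$ pointwise, hence $\frace e^{w}\cdot O(e^{2w})=\frace\cdot O(e^{3w})$, which by Lemma \ref{tildeU} is dominated near each $x_i$ by $\e^2 e^{3u_{x_i,\mu_i}}$ and away from the bubbles by $O(\e^3)$ — in both cases far smaller than the $O(\mu^{-(2-\alpha/2)})$ already accounted for. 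Once this is carefully done, the difference between $g_{{\bf{x}},\mu}$ and $\tilde g_{{\bf{x}},\mu}$ is negligible and the proposition follows by the contraction mapping argument above.
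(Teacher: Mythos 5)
Your proposal follows essentially the same route as the paper: a contraction mapping for $\eta\mapsto(Q_{\bf{x},\mu}L_{{\bf{x}},\mu})^{-1}[Q_{\bf{x},\mu}g_{{\bf{x}},\mu}(\eta)]$ on a small ball in $E_{{\bf{x}},\mu}$, with the only new work being the estimate of $g_{{\bf{x}},\mu}-\tilde g_{{\bf{x}},\mu}$, which both you and the paper reduce to the pointwise bound $\frace O(e^{3(u_0+\tilde W_{{\bf{x}},\mu}+\eta)})$ and then control via Lemma \ref{tildeU}; the paper works on the larger ball of radius $1/\mu$ and recovers the sharper bound a posteriori from \eqref{p09}, which is an immaterial difference.

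One slip worth fixing: in your final paragraph you set $w=1+u_0+\tilde W_{{\bf{x}},\mu}+\eta$ (with the $+1$) and claim the bracket $(1-e^{G(w)})^2e^{G(w)-w}-(1-e^w)$ is $O(e^{2w})$. With that convention $G(w)-w=e^{G(w)}-1\to-1$ as $w\to-\infty$, so the bracket tends to $e^{-1}-1\neq0$ and is only $O(1)$; moreover the "main term" $\frace e^w(1-e^w)$ then differs from the one in $\tilde g_{{\bf{x}},\mu}$ by a factor of $e$. The correct bookkeeping, consistent with your opening remarks, takes $w=u_0+\tilde W_{{\bf{x}},\mu}+\eta$ and uses $G(1+w)=w+e^{G(1+w)}$, whence $e^{G(1+w)}=e^w+e^{2w}+O(e^{3w})$ and $\frace e^{G(1+w)}(1-e^{G(1+w)})^2=\frace e^w(1-e^w)+\frace O(e^{3w})$, which is exactly the paper's \eqref{totaldiff} (obtained there by repeated mean-value expansions of $t\mapsto e^{G(t)}(1-e^{G(t)})^2$ using $G'(t)=1/(1-e^{G(t)})$). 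With that correction your argument goes through, including the Lipschitz estimate needed for the contraction, which the paper derives by the analogous two-variable expansion.
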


\begin{proof}
Define
\[%
\begin{array}
[c]{c}%
S_{\bf{x},\mu}:=\left \{  \eta   \in  E_{\bf{x},\mu}\ |\  \left \Vert \eta\right \Vert _{L^{\infty}\left(  \Omega \right)
}+\left \Vert \eta\right \Vert _{\mathbb{X}_{\alpha,{\bf{x}},\mu}}\leq \frac{1}{\mu}\right \}.
\end{array}
\]
 We denote $\left \Vert \eta\right \Vert _{S_{{\bf{x}},\mu}}:= \left \Vert \eta\right \Vert _{L^{\infty}\left(
\Omega \right)  }+\left \Vert \eta\right \Vert _{\mathbb{X}_{\alpha,{\bf{x}},\mu}}$ as the
norm in $S_{\bf{x},\mu}$. We consider the following   mapping
\[%
\begin{array}
[c]{c}%
B_{\bf{x},\mu}:\eta  \rightarrow
(Q_{\bf{x},\mu}L_{{\bf{x}},\mu})^{-1}[Q_{\bf{x},\mu}{g}_{{\bf{x}},\mu}\left( \eta_{{\bf{x}},\mu}\right)] .
\end{array}
\]

Step 1. First, we claim that  $B_{\bf{x},\mu}$  maps $S_{\bf{x},\mu}$ to  $S_{\bf{x},\mu}$.

In view of   Theorem \ref{pp2},  and  Lemma \ref{pp1}, we have for some constant $C>0$, independent of $\e>0$,
\begin{align*}
\left \Vert B_{\bf{x},\mu}\left(  \eta\right)  \right \Vert
_{S_{\bf{x},\mu}}  \leq C\ln
\mu \left \Vert {g}_{{\bf{x}},\mu}\left(\eta\right)  \right \Vert
_{\mathbb{Y}_{\alpha,{\bf{x}},\mu}}.
\end{align*}

 From the definition of $\tilde{g}_{{\bf{x}},\mu}\left(
\eta\right)  $ and $G(1+s-e^{s})=s$, we see that
\begin{equation*}
\begin{aligned}
&   {g}_{{\bf{x}},\mu}\left(\eta\right)   -\tilde{g}_{{\bf{x}},\mu}\left(\eta\right)
\\&=\frac{1}{{\varepsilon}^{2}}e^{u_0+\tilde{W}_{\bf{x},\mu}+\eta}
 (1-e^{u_0+\tilde{W}_{\bf{x},\mu}+\eta})
  -\frac{1}{{\varepsilon}^{2}}e^{G(1+u_0+\tilde{W}_{\bf{x},\mu}+\eta)}
 (1-e^{G(1+u_0+\tilde{W}_{\bf{x},\mu}+\eta)})^2
 \\&=\frac{1}{{\varepsilon}^{2}}e^{u_0+\tilde{W}_{\bf{x},\mu}+\eta}
 (1-e^{u_0+\tilde{W}_{\bf{x},\mu}+\eta})
   -\frac{1}{{\varepsilon}^{2}}e^{ u_0+\tilde{W}_{\bf{x},\mu}+\eta }
 (1-e^{ u_0+\tilde{W}_{\bf{x},\mu}+\eta})^2
 \\& +\frac{1}{{\varepsilon}^{2}}e^{G(1+u_0+\tilde{W}_{\bf{x},\mu}+\eta-e^{u_0+\tilde{W}_{\bf{x},\mu}+\eta})}
 (1-e^{G(1+u_0+\tilde{W}_{\bf{x},\mu}+\eta-e^{u_0+\tilde{W}_{\bf{x},\mu}+\eta})})^2
 \\&-\frac{1}{{\varepsilon}^{2}}e^{G(1+u_0+\tilde{W}_{\bf{x},\mu}+\eta)}
 (1-e^{G(1+u_0+\tilde{W}_{\bf{x},\mu}+\eta)})^2
 .
\end{aligned}\end{equation*}If $t=1+s-e^s$, then we see that $G(t)=G(1+s-e^{s})=s$, $dt=(1-e^{s})ds$, and $\frac{d}{dt}G(t)=\frac{ds}{dt}=\frac{1}{1-e^{s}}=\frac{1}{1-e^{G(t)}}$.\\
Since $\frac{d}{dt}e^{G(t)}(1-e^{G(t)})^2=e^{G(t)}(1-e^{G(t)})(1-3e^{G(t)})\frac{d}{dt}G(t)=e^{G(t)}(1-3e^{G(t)})$, we have for some $\theta\in(0,1)$,
\begin{equation*}
\begin{aligned}
&   {g}_{{\bf{x}},\mu}\left(\eta\right)   -\tilde{g}_{{\bf{x}},\mu}\left(\eta\right)
=\frac{1}{{\varepsilon}^{2}}e^{2u_0+2\tilde{W}_{\bf{x},\mu}+2\eta}
 (1-e^{u_0+\tilde{W}_{\bf{x},\mu}+\eta})
 \\&-\frac{1}{{\varepsilon}^{2}}e^{G(1+u_0+\tilde{W}_{\bf{x},\mu}+\eta-\theta e^{u_0+\tilde{W}_{\bf{x},\mu}+\eta})}
 (1-3e^{G(1+u_0+\tilde{W}_{\bf{x},\mu}+\eta-\theta e^{u_0+\tilde{W}_{\bf{x},\mu}+\eta})})e^{u_0+\tilde{W}_{\bf{x},\mu}+\eta}.\end{aligned}\end{equation*}
Since $\frac{d}{dt}e^{G(t)}(1-3e^{G(t)})=\frac{e^{G(t)}(1-6e^{G(t)})}{1-e^{G(t)}}$ and $\frac{d}{dt}G(t)= \frac{1}{1-e^{G(t)}}$, we have for some $\theta'\in(\theta,1)$ and $\theta''\in(\theta',1)$,
 \begin{equation*}
\begin{aligned}
&   {g}_{{\bf{x}},\mu}\left(\eta\right)   -\tilde{g}_{{\bf{x}},\mu}\left(\eta\right)
\\&=\frac{1}{{\varepsilon}^{2}}e^{2u_0+2\tilde{W}_{\bf{x},\mu}+2\eta}
 (1-e^{u_0+\tilde{W}_{\bf{x},\mu}+\eta})
 \\&-\frac{1}{{\varepsilon}^{2}}e^{G(1+u_0+\tilde{W}_{\bf{x},\mu}+\eta-  e^{u_0+\tilde{W}_{\bf{x},\mu}+\eta})}
 (1-3e^{G(1+u_0+\tilde{W}_{\bf{x},\mu}+\eta-  e^{u_0+\tilde{W}_{\bf{x},\mu}+\eta})})e^{u_0+\tilde{W}_{\bf{x},\mu}+\eta}
\\& +\frac{1}{{\varepsilon}^{2}}e^{G(1+u_0+\tilde{W}_{\bf{x},\mu}+\eta-  e^{u_0+\tilde{W}_{\bf{x},\mu}+\eta})}
 (1-3e^{G(1+u_0+\tilde{W}_{\bf{x},\mu}+\eta-  e^{u_0+\tilde{W}_{\bf{x},\mu}+\eta})})e^{u_0+\tilde{W}_{\bf{x},\mu}+\eta}
\\& -\frac{1}{{\varepsilon}^{2}}e^{G(1+u_0+\tilde{W}_{\bf{x},\mu}+\eta-\theta e^{u_0+\tilde{W}_{\bf{x},\mu}+\eta})}
 (1-3e^{G(1+u_0+\tilde{W}_{\bf{x},\mu}+\eta-\theta e^{u_0+\tilde{W}_{\bf{x},\mu}+\eta})})e^{u_0+\tilde{W}_{\bf{x},\mu}+\eta}
 \\&=\frac{1}{{\varepsilon}^{2}}e^{2u_0+2\tilde{W}_{\bf{x},\mu}+2\eta}
 (1-e^{u_0+\tilde{W}_{\bf{x},\mu}+\eta})
 -\frac{1}{{\varepsilon}^{2}}e^{ 2u_0+2\tilde{W}_{\bf{x},\mu}+2\eta}
 (1-3e^{u_0+\tilde{W}_{\bf{x},\mu}+\eta})
\\& +\frac{e^{G(1+u_0+\tilde{W}_{\bf{x},\mu}+\eta-  \theta'e^{u_0+\tilde{W}_{\bf{x},\mu}+\eta})}
 (1-6e^{G(1+u_0+\tilde{W}_{\bf{x},\mu}+\eta- \theta' e^{u_0+\tilde{W}_{\bf{x},\mu}+\eta})})(\theta-1) e^{2u_0+2\tilde{W}_{\bf{x},\mu}+2\eta}}{\e^2(1-e^{G(1+u_0+\tilde{W}_{\bf{x},\mu}+\eta- \theta' e^{u_0+\tilde{W}_{\bf{x},\mu}+\eta})})},
\end{aligned}\end{equation*} and thus
\begin{equation*}
\begin{aligned}
&   {g}_{{\bf{x}},\mu}\left(\eta\right)   -\tilde{g}_{{\bf{x}},\mu}\left(\eta\right)
=\frac{2e^{3u_0+3\tilde{W}_{\bf{x},\mu}+3\eta}}{{\varepsilon}^{2}}
\\& +\frac{\{e^{G(1+u_0+\tilde{W}_{\bf{x},\mu}+\eta-  \theta'e^{u_0+\tilde{W}_{\bf{x},\mu}+\eta})}-e^{G(1+u_0+\tilde{W}_{\bf{x},\mu}+\eta-  e^{u_0+\tilde{W}_{\bf{x},\mu}+\eta})}\}
 }{\e^2(1-e^{G(1+u_0+\tilde{W}_{\bf{x},\mu}+\eta- \theta' e^{u_0+\tilde{W}_{\bf{x},\mu}+\eta})})}\\&\times (1-6e^{G(1+u_0+\tilde{W}_{\bf{x},\mu}+\eta- \theta' e^{u_0+\tilde{W}_{\bf{x},\mu}+\eta})})(\theta-1) e^{2u_0+2\tilde{W}_{\bf{x},\mu}+2\eta}
  \\& +\frac{ (1-6e^{G(1+u_0+\tilde{W}_{\bf{x},\mu}+\eta- \theta' e^{u_0+\tilde{W}_{\bf{x},\mu}+\eta})})(\theta-1) e^{3u_0+3\tilde{W}_{\bf{x},\mu}+3\eta}}{\e^2(1-e^{G(1+u_0+\tilde{W}_{\bf{x},\mu}+\eta- \theta' e^{u_0+\tilde{W}_{\bf{x},\mu}+\eta})})}
  \\&=\frac{2e^{3u_0+3\tilde{W}_{\bf{x},\mu}+3\eta}}{{\varepsilon}^{2}}
\\& +\frac{(1-6e^{G(1+u_0+\tilde{W}_{\bf{x},\mu}+\eta- \theta' e^{u_0+\tilde{W}_{\bf{x},\mu}+\eta})})(1-\theta')(\theta-1) e^{3u_0+3\tilde{W}_{\bf{x},\mu}+3\eta} }{\e^2(1-e^{G(1+u_0+\tilde{W}_{\bf{x},\mu}+\eta- \theta' e^{u_0+\tilde{W}_{\bf{x},\mu}+\eta})})(1-e^{G(1+u_0+\tilde{W}_{\bf{x},\mu}+\eta- \theta'' e^{u_0+\tilde{W}_{\bf{x},\mu}+\eta})})}
  \\& +\frac{ (1-6e^{G(1+u_0+\tilde{W}_{\bf{x},\mu}+\eta- \theta' e^{u_0+\tilde{W}_{\bf{x},\mu}+\eta})})(\theta-1) e^{3u_0+3\tilde{W}_{\bf{x},\mu}+3\eta}}{\e^2(1-e^{G(1+u_0+\tilde{W}_{\bf{x},\mu}+\eta- \theta' e^{u_0+\tilde{W}_{\bf{x},\mu}+\eta})})}.
  \end{aligned}\end{equation*}
By  Lemma \ref{tildeU} and $G(-\infty)=-\infty$, we see that as $\e\to0$,
$e^{\tilde{W}_{\bf{x},\mu}}=O(\e)$ and $u_0+\tilde{W}_{\bf{x},\mu}+\eta\to\ -\infty$ uniformly on $\Omega$, which implies that
\begin{equation}
\begin{aligned}\label{totaldiff}
&   {g}_{{\bf{x}},\mu}\left(\eta\right)   -\tilde{g}_{{\bf{x}},\mu}\left(\eta\right)
= O\Big( \frac{e^{3u_0+3\tilde{W}_{\bf{x},\mu}+3\eta}}{\e^2}\Big)\ \ \textrm{on}\ \ \Omega.\end{aligned}\end{equation}
  In view of  Lemma \ref{tildeU}, we see that
$e^{\tilde{W}_{\bf{x},\mu}}=O(\e^3)$ on $\Omega\setminus[(\cup_{i=1}^k B_{d_i}(x_i)]$,  and thus
\begin{equation}
\begin{aligned}\label{l1}
&   {g}_{{\bf{x}},\mu}\left(\eta\right)   -\tilde{g}_{{\bf{x}},\mu}\left(\eta\right)=O(\e^7) \ \textrm{on} \ \Omega\setminus[(\cup_{i=1}^k B_{d_i}(x_i)].
\end{aligned}\end{equation}
On $B_{d_i}(x_i)$, from Lemma \ref{tildeU}, we see that $e^{\tilde{W}_{\bf{x},\mu}}=O\Big(\frac{\mu_i^2\e^2}{(1+\mu_i^2|y-x_i|^2)^2}\Big)=O\Big(\frac{\e}{(1+\mu_i^2|y-x_i|^2)^2}\Big)$ and
\begin{equation*}
\begin{aligned}
&   {g}_{{\bf{x}},\mu}\left(\eta\right)   -\tilde{g}_{{\bf{x}},\mu}\left(\eta\right)
  =O\Big(\frac{\e}{(1+\mu_i^2|y-x_i|^2)^6}\Big),
\end{aligned}\end{equation*}
which  implies that
\begin{equation}\label{l3}
\begin{aligned}
& \frac{1}{\mu_i^2} \|[ {g}_{{\bf{x}},\mu}\left(\eta\right)   -\tilde{g}_{{\bf{x}},\mu}\left(\eta\right)](x_i+\mu_i^{-1}y)
(1+|y|)^{1+\frac{\alpha}{2}}\|_{L^2(B_{d_i\mu_i}(0))}
\\&  = \Big\|\frac{O(\e^2)(1+|y|)^{1+\frac{\alpha}{2}}}{(1+|y|^2)^6}
\Big\|_{L^2(B_{d_i\mu_i}(0))}
 = O(\e^2).
\end{aligned}\end{equation}
From the above arguments \eqref{l1}-\eqref{l3}, we have \begin{equation}\label{p11}
\begin{aligned}\left \Vert {g}_{{\bf{x}},\mu}\left(\eta\right)
-\tilde{g}_{{\bf{x}},\mu}\left(  \eta\right)  \right \Vert _{\mathbb{Y}_{\alpha,{\bf{x}},\mu}%
}\leq \frac{C}{\mu^{2}}.\end{aligned}\end{equation}   Combining   Lemma \ref{estg} and  the estimation \eqref{p11} together, we obtain%
\begin{equation}
\left \Vert {g}_{{\bf{x}},\mu}\left(\eta\right)  \right \Vert
_{\mathbb{Y}_{\alpha,{\bf{x}},\mu}}\leq \frac{C}{\mu^{2-\frac{\alpha}{2}}}. \label{p09}%
\end{equation}
By \eqref{p09}, we see that for large $\mu>0$ (i.e. for small $\e>0$), $B_{\bf{x},\mu}$ maps $S_{\bf{x},\mu}$ to $S_{\bf{x},\mu}$.

Step 2. Now we claim that $B_{\bf{x},\mu}$  is a contraction map.

In view of   Theorem \ref{pp2},  and  Lemma \ref{pp1}, there is a constant $C>0$, independent of $\e>0$, satisfying  for any
$ \eta   ,\eta'   \in S_{\bf{x},\mu}$,
\begin{equation}\begin{aligned}\label{p06}
&  \left \Vert B_{\bf{x},\mu}\left(  \eta\right)  -B_{\bf{x},\mu}\left(
\eta' \right)  \right \Vert _{S_{\bf{x},\mu}%
}  \leq C \ln \mu \left \Vert g_{\bf{x},\mu}\left(  \eta\right)  -g_{\bf{x},\mu}( \eta' )
\right \Vert _{\mathbb{Y}_{\alpha,{\bf{x}},\mu}}.
\end{aligned}\end{equation}
To estimate $\left \Vert {g}_{{\bf{x}},\mu}\left(\eta\right)
-  g_{{\bf{x}},\mu}(\eta')    \right \Vert
_{\mathbb{Y}_{\alpha,{\bf{x}},\mu}}$, we observe that%
\begin{align*}
\left \Vert {g}_{{\bf{x}},\mu}\left(\eta\right) -  g_{{\bf{x}},\mu}(\eta')    \right \Vert _{\mathbb{Y}_{\alpha,{\bf{x}},\mu}}
& \leq \left \Vert {g}_{{\bf{x}},\mu}\left(\eta\right)  -\tilde
{g}_{{\bf{x}},\mu}\left(  \eta\right)  -\left(   g_{{\bf{x}},\mu}(\eta')    -\tilde{g}_{{\bf{x}},\mu}\left(  \eta
^{\prime}\right)  \right)  \right \Vert _{\mathbb{Y}_{\alpha,{\bf{x}},\mu}}\\
&  +\left \Vert \tilde{g}_{{\bf{x}},\mu}\left(  \eta\right)  -\tilde{g}%
_{{\bf{x}},\mu}\left(  \eta'\right)  \right \Vert _{\mathbb{Y}_{\alpha,{\bf{x}},\mu}}.
\end{align*}
We see that
\begin{align*}
&  {g}_{{\bf{x}},\mu}\left(\eta\right)  -\tilde
{g}_{{\bf{x}},\mu}\left(  \eta\right)  -\left(   g_{{\bf{x}},\mu}(\eta')    -\tilde{g}_{{\bf{x}},\mu}\left(  \eta
^{\prime}\right)  \right)
  \\&=  \frac{1}{\e^2}e^{2u_0+2\tilde{W}_{\bf{x},\mu}+2\eta}(1-e^{u_0+\tilde{W}_{\bf{x},\mu}+\eta}) -\frac{1}{\e^2}e^{2u_0+2\tilde{W}_{\bf{x},\mu}+2\eta'}(1-e^{u_0+\tilde{W}_{\bf{x},\mu}+\eta'})
  \\& +\frac{1}{\e^2}e^{G(1+u_0+\tilde{W}_{\bf{x},\mu}+\eta-e^{u_0+\tilde{W}_{\bf{x},\mu}+\eta})}(1-e^{G(1+u_0+\tilde{W}_{\bf{x},\mu}+\eta-e^{u_0+\tilde{W}_{\bf{x},\mu}+\eta} )})^2
  \\&-\frac{1}{\e^2}e^{G(1+u_0+\tilde{W}_{\bf{x},\mu}+\eta'-e^{u_0+\tilde{W}_{\bf{x},\mu}+\eta'})}(1-e^{G(1+u_0+\tilde{W}_{\bf{x},\mu}+\eta'-e^{u_0+\tilde{W}_{\bf{x},\mu}+\eta'} )})^2
  \\&-\frac{1}{\e^2}e^{G(1+u_0+\tilde{W}_{\bf{x},\mu}+\eta)}(1- e^{G(1+u_0+\tilde{W}_{\bf{x},\mu}+\eta )})^2
  +\frac{1}{\e^2}e^{G(1+u_0+\tilde{W}_{\bf{x},\mu}+\eta')}(1- e^{G(1+u_0+\tilde{W}_{\bf{x},\mu}+\eta' )})^2.\end{align*}
  Then for some numbers $\xi_0, \xi_1, \xi_2, \xi_3$ between $\eta$ and $\eta'$, and some $\theta\in(0,1)$, $\theta'\in(\theta,1)$, we have
  \begin{align*}
&  {g}_{{\bf{x}},\mu}\left(\eta\right)  -\tilde
{g}_{{\bf{x}},\mu}\left(  \eta\right)  -\left(   g_{{\bf{x}},\mu}(\eta')    -\tilde{g}_{{\bf{x}},\mu}\left(  \eta
^{\prime}\right)  \right)
  \\&=  \frac{1}{\e^2}e^{2u_0+2\tilde{W}_{\bf{x},\mu}+2\xi_0}(2-3e^{u_0+\tilde{W}_{\bf{x},\mu}+\xi_0})(\eta-\eta')
  \\& +\frac{ e^{G(1+u_0+\tilde{W}_{\bf{x},\mu}+\xi_1-e^{u_0+\tilde{W}_{\bf{x},\mu}+\xi_1})}}{\e^2}(1-3e^{G(1+u_0+\tilde{W}_{\bf{x},\mu}+\xi_1-e^{u_0+\tilde{W}_{\bf{x},\mu}+\xi_1} )})(1-e^{u_0+\tilde{W}_{\bf{x},\mu}+\xi_1})(\eta-\eta')
  \\&+\frac{1}{\e^2}e^{G(1+u_0+\tilde{W}_{\bf{x},\mu}+\xi_2)}(1-3e^{G(1+u_0+\tilde{W}_{\bf{x},\mu}+\xi_2)})(\eta'-\eta)
  \\&= \frac{1}{\e^2}e^{2u_0+2\tilde{W}_{\bf{x},\mu}+2\xi_0}(2-3e^{u_0+\tilde{W}_{\bf{x},\mu}+\xi_0})(\eta-\eta')
  \\& +\Big\{\frac{ e^{G(1+u_0+\tilde{W}_{\bf{x},\mu}+\xi_1-e^{u_0+\tilde{W}_{\bf{x},\mu}+\xi_1})}}{\e^2}(1-3e^{G(1+u_0+\tilde{W}_{\bf{x},\mu}+\xi_1-e^{u_0+\tilde{W}_{\bf{x},\mu}+\xi_1} )})\\&-\frac{ e^{G(1+u_0+\tilde{W}_{\bf{x},\mu}+\xi_2)}}{\e^2}(1-3e^{G(1+u_0+\tilde{W}_{\bf{x},\mu}+\xi_2 )})\Big\}(\eta-\eta')
  \\&-\frac{1}{\e^2}e^{2u_0+2\tilde{W}_{\bf{x},\mu}+2\xi_1}(1-3e^{G(u_0+\tilde{W}_{\bf{x},\mu}+\xi_1-e^{u_0+\tilde{W}_{\bf{x},\mu}+\xi_1})})(\eta-\eta')
  \\&=O\Big(\frac{e^{2u_0+2\tilde{W}_{\bf{x},\mu}}}{\e^2}\Big)(\eta-\eta')
  \\&+  \frac{(1-6e^{G(1+u_0+\tilde{W}_{\bf{x},\mu}+\xi_3-\theta e^{u_0+\tilde{W}_{\bf{x},\mu}+\xi_1})})}{\e^2(1-e^{G(1+u_0+\tilde{W}_{\bf{x},\mu}+\xi_3-\theta e^{u_0+\tilde{W}_{\bf{x},\mu}+\xi_1})})}(\eta-\eta')(\xi_1-\xi_2-e^{u_0+\tilde{W}_{\bf{x},\mu}+\xi_1})
  \\&\times \Big\{\frac{e^{u_0+\tilde{W}_{\bf{x},\mu}+\xi_3}-\theta e^{u_0+\tilde{W}_{\bf{x},\mu}+\xi_1}}{(1-e^{G(1+u_0+\tilde{W}_{\bf{x},\mu}+\xi_3-\theta' e^{u_0+\tilde{W}_{\bf{x},\mu}+\xi_1})})}+e^{u_0+\tilde{W}_{\bf{x},\mu}+\xi_3}\Big\}
   \\&=O\Big(\frac{e^{2u_0+2\tilde{W}_{\bf{x},\mu}}}{\e^2}\Big)(\eta-\eta')+ O\Big(\frac{e^{u_0+\tilde{W}_{\bf{x},\mu}}}{\e^2}\Big)(\eta-\eta')^2.\end{align*}
On $\Omega\setminus [\cup B_{d_i}(x_i)]$,   we see that from Lemma \ref{tildeU},
\begin{equation}
\begin{aligned}\label{n2}
& \| {g}_{{\bf{x}},\mu}\left(\eta\right)  -\tilde
{g}_{{\bf{x}},\mu}\left(  \eta\right)  -\left(   g_{{\bf{x}},\mu}(\eta')    -\tilde{g}_{{\bf{x}},\mu}\left(  \eta
^{\prime}\right)  \right)  \|_{L^\infty(\Omega\setminus [\cup B_{d_i}(x_i)])}
\\&\le  O(\e)(\|\eta-\eta'\|_{L^\infty(\Omega)}+(\|\eta\|_{L^\infty(\Omega)}+\|\eta'\|_{L^\infty(\Omega)})\|\eta-\eta'\|_{L^\infty(\Omega)})
.
\end{aligned}\end{equation}
On $B_{d_i}(x_i)$, from Lemma \ref{tildeU}, we see that $e^{\tilde{W}_{\bf{x},\mu}}=O\Big(\frac{\mu_i^2\e^2}{(1+\mu_i^2|y-x_i|^2)^2}\Big)=O\Big(\frac{\e}{(1+\mu_i^2|y-x_i|^2)^2}\Big)$ and\begin{equation}
\begin{aligned}\label{n3}
& \frac{1}{\mu_i^2} \|[ {g}_{{\bf{x}},\mu}\left(\eta\right)  -\tilde
{g}_{{\bf{x}},\mu}\left(  \eta\right)  -\left(   g_{{\bf{x}},\mu}(\eta')    -\tilde{g}_{{\bf{x}},\mu}\left(  \eta
^{\prime}\right)  \right)](x_i+\mu_i^{-1}y)
 (1+|y|)^{1+\frac{\alpha}{2}}\|_{L^2(B_{d_i\mu_i}(0))}
\\&  =(\|\eta\|_{L^\infty(\Omega)}+\|\eta'\|_{L^\infty(\Omega)}+O(\e))\|\eta-\eta'\|_{L^\infty(\Omega)}.
\end{aligned}\end{equation}
From  Lemma \ref{estg}, and the above estimations \eqref{n2}-\eqref{n3}, we have
\begin{equation}\label{p07}
\begin{aligned}
&  \left \Vert {g}_{{\bf{x}},\mu}\left(\eta\right) -  g_{{\bf{x}},\mu}(\eta')    \right \Vert _{\mathbb{Y}_{\alpha,{\bf{x}},\mu}}
\le(\|\eta\|_{L^\infty(\Omega)}+\|\eta'\|_{L^\infty(\Omega)}+O(\e^{\frac{1}{2}}))\|\eta-\eta'\|_{L^\infty(\Omega)}.
\end{aligned}\end{equation}
In view of the estimations (\ref{p06})-(\ref{p07}), we obtain that  $B_{\bf{x},\mu}$ is a
contraction map on $S_{\bf{x},\mu}$.

Step 3.
In view of Step 1, Step 2, and  contraction mapping
theorem, there exists a unique solution $ \eta_{{\bf{x}},\mu}  \in S_{\bf{x},\mu}$ of (\ref{fix2}).
Moreover,  from Theorem \ref{pp2},  Lemma \ref{pp1}, and \eqref{p09}, we obtain that \begin{align*}
\left \Vert \eta_{{\bf{x}},\mu}\right \Vert _{L^{\infty}}+\left \Vert \eta_{{\bf{x}},\mu
}\right \Vert _{\mathbb{X}_{\alpha,{\bf{x}},\mu}}   \leq \frac{C\ln \mu}{\mu^{2-\frac{\alpha}{2}}},
\end{align*}
where $C>0$ is independent of $\e>0$. Now we complete the proof of Proposition \ref{pp}.
\end{proof}

By Proposition \ref{pp}, we get that for any $\mu\in \left[  \frac{\beta_{0}}{\sqrt{\varepsilon}}%
,\frac{\beta_{1}}{\sqrt{\varepsilon}}\right]  $, and any $\bf{x}$ close to $\bf{q}$, where ${\bf{q}}$ is a non-degenerate critical point of $G^{\ast}\left(  \bf{q} \right)  $ with  $D\left(  \bf{q}\right)  <0$,
 there is $\eta_{{\bf{x}},\mu} \in S_{\bf{x},\mu}$ such that%
\begin{equation}\begin{aligned}
\label{final}
&\Delta(\tilde{W}_{\bf{x},\mu}+\eta_{{\bf{x}},\mu})
\\&=- \frac{1}{{\varepsilon}^{2}}e^{G(u_0+\tilde
{W}_{\bf{x},\mu}+\eta_{{\bf{x}},\mu})}(1-e^{G(u_0+\tilde{W}_{x,\mu
}+\eta_{{\bf{x}},\mu})})^2 +8k\pi+c_{0}Z_{{\bf{x}},\mu,0}+\sum_{i=1}^{k}\sum_{j=1}^{2}
c_{ij}Z_{x,\mu,j},\end{aligned}
\end{equation}
where $c_{0}$, $c_{ij}$ are constants satisfying \[ L_{{\bf{x}},\mu}\left(  \eta_{{\bf{x}},\mu}\right)
-g_{{\bf{x}},\mu}\left(  \eta_{{\bf{x}},\mu}\right)-c_{0}Z_{{\bf{x}},\mu,0}-\sum_{i=1}^{k}\sum_{j=1}^{2}
c_{ij}Z_{x,\mu,j} \in F_{{\bf{x}},\mu}.\]
In the following, we will choose $\bf{x},\mu$ suitably ( depending on $\varepsilon$
) such that the corresponding $c_{0}$, $c_{ij}$ are zero and hence the
solution $ \eta_{{\bf{x}},\mu} $ is exactly the solution to
 (\ref{err}) which implies that $ u_\e = 1+u_0+\tilde{W}_{\bf{x},\mu}+\eta_{{\bf{x}},\mu}$ is a solution to \eqref{1}.
It is standard to prove the following lemma.
\begin{lemma}\label{ppp}
If
\begin{equation*}\begin{aligned}
&\int_{\Omega}\Big[ \Delta \eta_{{\bf{x}},\mu}
+\frac{1}{{\varepsilon}^{2}}e^{G(u_0+\tilde
{W}_{\bf{x},\mu}+\eta_{{\bf{x}},\mu})}(1-e^{G(u_0+\tilde{W}_{x,\mu
}+\eta_{{\bf{x}},\mu})})^2+\Delta\tilde{W}_{\bf{x},\mu}-8k\pi \Big]
Y_{x_{i},\mu_{i},j}dx=0,
\end{aligned}
\end{equation*}
and%
\begin{equation*}\begin{aligned}
&\int_{\Omega}\Big[ \Delta \eta_{{\bf{x}},\mu}
+\frac{1}{{\varepsilon}^{2}}e^{G(u_0+\tilde
{W}_{\bf{x},\mu}+\eta_{{\bf{x}},\mu})}(1-e^{G(u_0+\tilde{W}_{x,\mu
}+\eta_{{\bf{x}},\mu})})^2+\Delta\tilde{W}_{\bf{x},\mu}-8k\pi  \Big]  Y_{{\bf{x}},\mu,0}dx=0,
\end{aligned}
\end{equation*}
then $c_{0}=c_{ij}=0$ for $i=1,...,k$ and $j=1,2$.
\end{lemma}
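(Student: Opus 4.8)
The plan is to recognize the function inside the brackets as the reduced error $L_{{\bf{x}},\mu}\eta_{{\bf{x}},\mu}-g_{{\bf{x}},\mu}(\eta_{{\bf{x}},\mu})$, which by \eqref{fix2} lies in $\ker Q_{\bf{x},\mu}$ and hence in the span of the functions $Z_{{\bf{x}},\mu,0},Z_{x_i,\mu_i,j}$; testing against the $Y$'s and using the two hypotheses then produces a homogeneous linear system for $(c_0,c_{ij})$, which must be trivial because its matrix is invertible for $\e$ small.

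More precisely, let $\Phi_{{\bf{x}},\mu}$ denote the function inside the brackets. From the definitions of $L_{{\bf{x}},\mu}$ and $g_{{\bf{x}},\mu}$ the two copies of $h_{\mu}\eta_{{\bf{x}},\mu}$ cancel, so $\Phi_{{\bf{x}},\mu}=L_{{\bf{x}},\mu}\eta_{{\bf{x}},\mu}-g_{{\bf{x}},\mu}(\eta_{{\bf{x}},\mu})$. Since $\eta_{{\bf{x}},\mu}$ solves \eqref{fix2} we have $Q_{\bf{x},\mu}\Phi_{{\bf{x}},\mu}=0$, and as $Q_{\bf{x},\mu}\xi=\xi-c_0Z_{{\bf{x}},\mu,0}-\sum_{i,j}c_{ij}Z_{x_i,\mu_i,j}$ this gives $\Phi_{{\bf{x}},\mu}=c_0Z_{{\bf{x}},\mu,0}+\sum_{i,j}c_{ij}Z_{x_i,\mu_i,j}$ with exactly the constants $c_0,c_{ij}$ of \eqref{final}. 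I would then multiply this identity by $Y_{{\bf{x}},\mu,0}$, resp. by $Y_{x_l,\mu_l,m}$ for $1\le l\le k$, $m=1,2$, and integrate over $\Omega$; under the two hypotheses of the lemma the left-hand sides vanish, so $(c_0,(c_{ij}))$ solves the homogeneous linear system
\[
\mathcal M\,(c_0,(c_{ij}))^{T}=0,\qquad \mathcal M=\Big(\textstyle\int_\Omega Z_aY_b\,dx\Big)_{a,b},
\]
the indices $a,b$ ranging over $\{0\}\cup\{(i,j):1\le i\le k,\ j=1,2\}$. Thus the lemma reduces to the invertibility of $\mathcal M$ for $\e$ small.

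This invertibility is the unique-solvability condition built into the definition of $Q_{\bf{x},\mu}$, and it is already contained in \cite{LY1} (it underlies Lemma \ref{pp1}); it can also be checked directly. Each $Z_a=-\Delta Y_a+h_\mu Y_a$ is supported in $\cup_i\overline{B_{2d_i}(x_i)}$, which we may take pairwise disjoint, so $\int_\Omega Z_{x_i,\mu_i,j}Y_{x_l,\mu_l,m}\,dx=0$ for $i\ne l$. In the rescaled variable $z=\mu_i(y-x_i)$ near $x_i$, $Y_{x_i,\mu_i,j}$ is, to leading order, $\mu_i\frac{z_j}{1+|z|^2}$ and $Y_{{\bf{x}},\mu,0}$ is $\frac1{\mu_1}\frac{1-|z|^2}{1+|z|^2}$ (the translation and dilation kernels of the linearized Liouville operator), so a short computation gives $\int_\Omega Z_{x_i,\mu_i,j}Y_{x_i,\mu_i,m}\,dx=\delta_{jm}\,a_i\mu_i^{2}(1+o(1))$ and $\int_\Omega Z_{{\bf{x}},\mu,0}Y_{{\bf{x}},\mu,0}\,dx=a_0\mu^{-2}(1+o(1))$ with $a_i,a_0>0$, while all the remaining pairings (the mixed ones and the $j\ne m$ ones) are of strictly lower order, by the oddness/evenness of the kernels in $z$. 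Normalizing the rows and columns of $\mathcal M$ then turns it into $\mathrm{Id}+o(1)$ as $\e\to0$, hence invertible, whence $c_0=c_{ij}=0$.

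The only technically delicate step is this invertibility of $\mathcal M$, i.e. estimating the lower-order remainders in $\int_\Omega Z_aY_b\,dx$ with the cutoffs $\chi_i$, the regular part $\gamma$, and the relations $\mu_i=\sqrt{\rho_1/\rho_i}\,\mu$, $\mu\sim\e^{-1/2}$ taken into account; but these computations are already performed in \cite{LY1} when $Q_{\bf{x},\mu}$ is constructed, so nothing new is needed. What is specific to equation \eqref{1} here is only the algebraic identity $\Phi_{{\bf{x}},\mu}=L_{{\bf{x}},\mu}\eta_{{\bf{x}},\mu}-g_{{\bf{x}},\mu}(\eta_{{\bf{x}},\mu})$ and the reduction of \eqref{fix2} plus the hypotheses to the system $\mathcal M\,(c_0,(c_{ij}))^{T}=0$.
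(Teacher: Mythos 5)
Your argument is correct and is exactly the standard reduction the paper has in mind --- indeed the paper offers no proof of Lemma \ref{ppp}, merely calling it standard. The identification of the bracketed function with $c_{0}Z_{{\bf{x}},\mu,0}+\sum_{i,j}c_{ij}Z_{x_{i},\mu_{i},j}$ is immediate from \eqref{final}, and the invertibility of the pairing matrix $\bigl(\int_{\Omega}Z_{a}Y_{b}\,dx\bigr)$ is precisely the condition that makes the projection $Q_{\bf{x},\mu}$ well defined, so your homogeneous system indeed forces $c_{0}=c_{ij}=0$.
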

Let $x_i=(x_{i1},x_{i2})$.
By using the proof of \cite[Theorem 1.2]{LY1}, we get the following result.
\begin{proposition}\label{pppp}
We have%
\begin{equation}\begin{aligned}
&\int_{\Omega}\Big[ \Delta \eta_{{\bf{x}},\mu}
+\frac{1}{{\varepsilon}^{2}}e^{G(u_0+\tilde
{W}_{\bf{x},\mu}+\eta_{{\bf{x}},\mu})}(1-e^{G(u_0+\tilde{W}_{x,\mu
}+\eta_{{\bf{x}},\mu})})^2+\Delta\tilde{W}_{\bf{x},\mu}-8k\pi \Big]
Y_{x_{i},\mu_{i},j}dx
\\&=A_0\frac{\partial G^{\ast}\left(  {\bf{x}}\right)  }{\partial x_{ij}}+O\left(
\frac{\ln \mu}{\mu^{2-\frac{\alpha}{2}}}\right)\ \ \textrm{for}\ \ j=1,2,
\label{1p1}%
\end{aligned}
\end{equation}
and%
\begin{equation}\begin{aligned}
&\int_{\Omega}\Big[\Delta \eta_{{\bf{x}},\mu}
+\frac{1}{{\varepsilon}^{2}}e^{G(u_0+\tilde
{W}_{\bf{x},\mu}+\eta_{{\bf{x}},\mu})}(1-e^{G(u_0+\tilde{W}_{x,\mu
}+\eta_{{\bf{x}},\mu})})^2+\Delta\tilde{W}_{\bf{x},\mu}-8k\pi  \Big] Y_{{\bf{x}},\mu,0}dx
\\&=\frac{8}{\rho_{1}\mu^{3}}\left(
\sum_{i=1}^{k}
\rho_{i}\left(  \int_{\Omega_{i}\setminus B_{d_i}\left(  x_i\right)  }%
\frac{e^{f_{{\bf{x}},i}}-1}{\left \vert y-x_{i}\right \vert ^{4}}-\int_{\mathbb{R}^2\setminus
\Omega_{i}}\frac{1}{\left \vert y-x_{i}\right \vert ^{4}}\right)  \right)
\\&+B_0{\varepsilon}^{2}\mu
+\frac{1}{\mu^{3}}O\left(  \left \vert DG^{\ast}\left(  {\bf{x}}\right)  \right \vert
^{2}\ln \mu+\delta^{2}\right)  +O\left(  \frac{1}{\mu^{5}}\right),
\label{1p2}%
\end{aligned}
\end{equation}
where $A_0, B_0>0$ are  constants, $\delta>0$ is any small constant, $\Omega
_{1},...,\Omega_{k}$ are any open set with $\Omega_{i}\cap \Omega_{j}%
=\emptyset$ for $i\neq j$, $\cup_{i=1}^{k}\bar{\Omega}_{i}=\Omega$, $B_{d_{i}%
}\left(  x_{i}\right)  \subset\subset \Omega_{i}$, $i=1,...,k$, and%
\begin{equation*}\begin{aligned}
&f_{{\bf{x}},i}\left(  y\right)  =  8\pi \left(  \gamma \left(  y,x_{i}\right)
-\gamma \left(  x_{i},x_{i}\right)  +%
{\displaystyle \sum \limits_{j\neq i}^{k}}
\left(  G\left(  y,x_{j}\right)  -G\left(  x_{i},x_{j}\right)  \right)
\right) +u_0\left(  y\right)  -u_0\left(  x_{i}\right).
\end{aligned}\end{equation*}

\end{proposition}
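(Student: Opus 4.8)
The plan is to reduce the problem to the analogous computation already carried out in \cite{LY1} for the Chern--Simons--Higgs equation \eqref{sceq}, by controlling the discrepancy between the nonlinearity $\frac{1}{\e^2}e^{G(\cdot)}(1-e^{G(\cdot)})^2$ in \eqref{1} and the nonlinearity $\frac{1}{\e^2}e^{\cdot}(1-e^{\cdot})$ in \eqref{sceq}. First I would write, using $G(1+s-e^s)=s$, the decomposition
\[
\frac{1}{\e^2}e^{G(u_0+\tilde W_{\bf{x},\mu}+\eta_{{\bf{x}},\mu})}\bigl(1-e^{G(u_0+\tilde W_{\bf{x},\mu}+\eta_{{\bf{x}},\mu})}\bigr)^2
=\frac{1}{\e^2}e^{u_0+\tilde W_{\bf{x},\mu}+\eta_{{\bf{x}},\mu}}\bigl(1-e^{u_0+\tilde W_{\bf{x},\mu}+\eta_{{\bf{x}},\mu}}\bigr)+\bigl(g_{{\bf{x}},\mu}(\eta_{{\bf{x}},\mu})-\tilde g_{{\bf{x}},\mu}(\eta_{{\bf{x}},\mu})\bigr),
\]
so that the integrals in \eqref{1p1}--\eqref{1p2} split into the ``CSH part'' (the integrand with $\frac{1}{\e^2}e^{\,\cdot\,}(1-e^{\,\cdot\,})$ and $\eta_{{\bf{x}},\mu}$ replacing the CSH perturbation term) plus an error integral against $Y_{x_i,\mu_i,j}$ or $Y_{{\bf{x}},\mu,0}$. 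For the CSH part I would invoke verbatim the computation in the proof of \cite[Theorem 1.2]{LY1}, which gives exactly $A_0\,\partial_{x_{ij}}G^{\ast}({\bf{x}})+O(\ln\mu\,\mu^{-2+\alpha/2})$ in the first case, and the leading term $\frac{8}{\rho_1\mu^3}\bigl(\sum_i\rho_i(\cdots)\bigr)+B_0\e^2\mu+\frac{1}{\mu^3}O(|DG^{\ast}|^2\ln\mu+\delta^2)+O(\mu^{-5})$ in the second; here I would note that the estimate $\|\eta_{{\bf{x}},\mu}\|_{L^\infty}+\|\eta_{{\bf{x}},\mu}\|_{\mathbb{X}_{\alpha,{\bf{x}},\mu}}\le C\ln\mu\,\mu^{-2+\alpha/2}$ from Proposition \ref{pp} is what feeds the error terms, exactly as the analogous bound does in \cite{LY1}.

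Next I would estimate the extra error integrals coming from $g_{{\bf{x}},\mu}(\eta_{{\bf{x}},\mu})-\tilde g_{{\bf{x}},\mu}(\eta_{{\bf{x}},\mu})$. From the computation in the proof of Proposition \ref{pp} (culminating in \eqref{totaldiff}, \eqref{l1}, and the pointwise bound on $B_{d_i}(x_i)$), together with Lemma \ref{tildeU}, this difference is $O(\e^7)$ off the balls $B_{d_i}(x_i)$ and $O\bigl(\e^2(1+\mu_i^2|y-x_i|^2)^{-6}\bigr)$ on each $B_{d_i}(x_i)$. Pairing against $|Y_{x_i,\mu_i,j}|\le C$ and integrating (the measure of $B_{d_i\mu_i}$ after rescaling is controlled by the fast decay) yields a contribution of order $O(\e^2)=O(\mu^{-4})$, absorbed into $O(\ln\mu\,\mu^{-2+\alpha/2})$; pairing against $|Y_{{\bf{x}},\mu,0}|=O(\mu^{-1})$ on $B_{d_i}(x_i)$ and $O(\mu_i^{-1})$ elsewhere gives a contribution of order $O(\e^2\mu^{-1})=O(\mu^{-5})$, which is absorbed into the stated $O(\mu^{-5})$. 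I would also record, as is done implicitly in \cite{LY1}, that replacing the integration radius from $r\to0$ by $d_i$ (and the cut-off domains $\Omega_i$ being arbitrary with $B_{d_i}(x_i)\subset\subset\Omega_i$) changes the quantity $\sum_i\rho_i(\int_{\Omega_i\setminus B_{d_i}}\cdots-\int_{\R^2\setminus\Omega_i}\cdots)$ only by $O(\mu^{-2})$ relative error inside the $\frac{8}{\rho_1\mu^3}$ prefactor, which is again lower order.

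The main obstacle is the second identity \eqref{1p2}: there the leading term is of size $\mu^{-3}$, so every error must genuinely be shown to be $o(\mu^{-3})$ (or matched to the explicitly listed remainders $B_0\e^2\mu$, $\mu^{-3}O(|DG^{\ast}|^2\ln\mu+\delta^2)$, $O(\mu^{-5})$), and in particular the contribution of $g_{{\bf{x}},\mu}-\tilde g_{{\bf{x}},\mu}$ against $Y_{{\bf{x}},\mu,0}$ must be controlled sharply rather than crudely — this is where Lemma \ref{tildeU} (which pins down $e^{\tilde W_{\bf{x},\mu}}$ as $O(\e)$ on the bubble regions and $O(\e^3)$ away from them, hence the cubic gain $e^{3u_0+3\tilde W_{\bf{x},\mu}+3\eta}/\e^2$ in \eqref{totaldiff}) is essential. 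Once these error bounds are in hand, \eqref{1p1} and \eqref{1p2} follow by combining the CSH computation of \cite[Theorem 1.2]{LY1} with the discrepancy estimates, and the proof is complete.
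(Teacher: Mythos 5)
Your proposal follows essentially the same route as the paper: both reduce \eqref{1p1}--\eqref{1p2} to the estimates (4.26) and (4.28) of \cite{LY1} for the Chern--Simons--Higgs nonlinearity and then bound the discrepancy integrals of $g_{{\bf{x}},\mu}(\eta_{{\bf{x}},\mu})-\tilde g_{{\bf{x}},\mu}(\eta_{{\bf{x}},\mu})$ against $Y_{x_i,\mu_i,j}$ and $Y_{{\bf{x}},\mu,0}$ via \eqref{totaldiff} and Lemma \ref{tildeU}, obtaining remainders ($O(\mu^{-3})$ resp.\ $O(\mu^{-5})$ in the paper; your $O(\mu^{-4})$ for the first is a harmless sharpening) that are absorbed into the stated error terms. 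The argument is correct and matches the paper's proof.
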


\begin{proof}
In \cite{LY1}, if  $\eta_{{\bf{x}},\mu}\in \mathbb{X}_{\alpha,{\bf{x}},\mu}$ satisfies $\left \Vert \eta_{{\bf{x}},\mu}\right \Vert _{L^{\infty}\left(  \Omega \right)
}+\left \Vert \eta_{{\bf{x}},\mu}\right \Vert _{\mathbb{X}_{\alpha,{\bf{x}},\mu}}\leq \frac{C\ln \mu}{\mu^{2-\frac{\alpha}{2}}}$, then the followings  hold:
\begin{equation}\begin{aligned}
&\int_{\Omega}\left(  \Delta \left(  \tilde{W}_{\bf{x},\mu}+\eta_{{\bf{x}},\mu}\right)
+\frac{1}{{\varepsilon}^{2}}e^{u_0+\tilde{W}_{\bf{x},\mu}+\eta_{{\bf{x}},\mu}%
}\left(  1-e^{u_0+\tilde{W}_{\bf{x},\mu}+\eta_{{\bf{x}},\mu}}\right)  -8k\pi \right)
Y_{x_{i},\mu_{i},j}dx\\&
=A_0\frac{\partial G^{\ast}\left(  {\bf{x}}\right)  }{\partial x_{ij}}+O\left(
\frac{\ln \mu}{\mu^{2-\frac{\alpha}{2}}}\right)\ \ \textrm{for}\ \ j=1,2,
\end{aligned}\label{1p11}%
\end{equation}
and%
\begin{equation}\begin{aligned}&
\int_{\Omega}\left(  \Delta \left(  \tilde{W}_{\bf{x},\mu}+\eta_{{\bf{x}},\mu}\right)  +\frac
{1}{{\varepsilon}^{2}}e^{u_0+\tilde{W}_{\bf{x},\mu}+\eta_{{\bf{x}},\mu}}\left(
1-e^{u_0+\tilde{W}_{\bf{x},\mu}+\eta_{{\bf{x}},\mu}}\right)  -8k\pi \right)  Y_{{\bf{x}},\mu,0}dx\\&
=\frac{8}{\rho_{1}\mu^{3}}\left(
{\displaystyle \sum \limits_{i=1}^{k}}
\rho_{i}\left(  \int_{\Omega_{i}\setminus B_{d_i}\left(  x_i\right)  }%
\frac{e^{f_{{\bf{x}},i}}-1}{\left \vert y-x_{i}\right \vert ^{4}}-\int_{\mathbb{R}^2\setminus
\Omega_{i}}\frac{1}{\left \vert y-x_{i}\right \vert ^{4}}\right)  \right)
+B_0{\varepsilon}^{2}\mu \\&
+\frac{1}{\mu^{3}}O\left(  \left \vert DG^{\ast}\left(  {\bf{x}}\right)  \right \vert
^{2}\ln \mu+\delta^{2}\right)  +O\left(  \frac{1}{\mu^{5}}\right)  .
\end{aligned}\label{1p12}%
\end{equation}
Indeed, the estimation \eqref{1p11} was obtained in \cite[(4.26)]{LY1} and the estimation \eqref{1p12} was obtained in \cite[(4.28)]{LY1}.\\
Comparing to our integral \eqref{1p1}-\eqref{1p11} and (\ref{1p2})-\eqref{1p12}, the differences are the
following integrals:%
\begin{equation*}\begin{aligned}
&\int_{\Omega}\Big\{ \frac{1}{{\varepsilon}^{2}}e^{u_0+\tilde{W}_{\bf{x},\mu}+\eta_{{\bf{x}},\mu}}
 (1-e^{u_0+\tilde{W}_{\bf{x},\mu}+\eta_{{\bf{x}},\mu}})
 \\& -\frac{1}{{\varepsilon}^{2}}e^{G(1+u_0+\tilde{W}_{\bf{x},\mu}+\eta_{{\bf{x}},\mu})}
 (1-e^{G(1+u_0+\tilde{W}_{\bf{x},\mu}+\eta_{{\bf{x}},\mu})})^2\Big\} Y_{x_{i},\mu_{i},j}dx,
\end{aligned}\end{equation*}
and%
\begin{equation*}\begin{aligned}
&\int_{\Omega}\Big\{ \frac{1}{{\varepsilon}^{2}}e^{u_0+\tilde{W}_{\bf{x},\mu}+\eta_{{\bf{x}},\mu}}
 (1-e^{u_0+\tilde{W}_{\bf{x},\mu}+\eta_{{\bf{x}},\mu}})
 \\& -\frac{1}{{\varepsilon}^{2}}e^{G(1+u_0+\tilde{W}_{\bf{x},\mu}+\eta_{{\bf{x}},\mu})}
 (1-e^{G(1+u_0+\tilde{W}_{\bf{x},\mu}+\eta_{{\bf{x}},\mu})})^2\Big\} Y_{{\bf{x}},\mu,0}dx.
\end{aligned}\end{equation*}
From \eqref{totaldiff}, we remind that
\begin{equation}\label{estforu21}
\begin{aligned}
&   {g}_{{\bf{x}},\mu}\left(\eta\right)   -\tilde{g}_{{\bf{x}},\mu}\left(\eta\right)
\\&=\frac{1}{{\varepsilon}^{2}}e^{u_0+\tilde{W}_{\bf{x},\mu}+\eta}
 (1-e^{u_0+\tilde{W}_{\bf{x},\mu}+\eta})
  -\frac{1}{{\varepsilon}^{2}}e^{G(1+u_0+\tilde{W}_{\bf{x},\mu}+\eta)}
 (1-e^{G(1+u_0+\tilde{W}_{\bf{x},\mu}+\eta)})^2
 \\&=O\Big( \frac{e^{3u_0+3\tilde{W}_{\bf{x},\mu}+3\eta}}{\e^2}\Big)\ \ \textrm{on}\ \ \Omega.
\end{aligned}\end{equation}
From Lemma \ref{tildeU}, we see that \begin{equation}\begin{aligned} e^{\tilde{W}_{\bf{x},\mu}}=O\Big(\frac{\e}{(1+\mu_i^2|y-x_i|^2)^2}\Big)\  \textrm{on}\  B_{2d_i}(x_i)\ \textrm{and}\ e^{\tilde{W}_{\bf{x},\mu}}=O(\e^3)\ \textrm{on}\ \Omega\setminus[\cup B_{d_i}(x_i)].
\label{estforu2}\end{aligned}\end{equation}
Then from   \eqref{estforu21}-\eqref{estforu2}, we see  that
\begin{equation}\begin{aligned}\label{estforu3}
&\int_{\Omega}\Big\{ \frac{1}{{\varepsilon}^{2}}e^{u_0+\tilde{W}_{\bf{x},\mu}+\eta_{{\bf{x}},\mu}}
 (1-e^{u_0+\tilde{W}_{\bf{x},\mu}+\eta_{{\bf{x}},\mu}})
  \\&
  -\frac{1}{{\varepsilon}^{2}}e^{G(1+u_0+\tilde{W}_{\bf{x},\mu}+\eta_{{\bf{x}},\mu})}
 (1-e^{G(1+u_0+\tilde{W}_{\bf{x},\mu}+\eta_{{\bf{x}},\mu})})^2\Big\}  Y_{x_{i},\mu_{i},j}dy\\&
=\int_{\Omega}O\Big(\frac{e^{3u_0+3\tilde{W}_{\bf{x},\mu}}}{{\varepsilon}^{2}}\Big)Y_{x_{i},\mu_{i},j}dy =O\Big(\frac{1}{\mu^{3}}\Big).\end{aligned}\end{equation}
Similarly, we also see that from   \eqref{estforu21}-\eqref{estforu2},
\begin{equation}\begin{aligned}\label{estforu4}
&\int_{\Omega}\Big\{ \frac{1}{{\varepsilon}^{2}}e^{u_0+\tilde{W}_{\bf{x},\mu}+\eta_{{\bf{x}},\mu}}
 (1-e^{u_0+\tilde{W}_{\bf{x},\mu}+\eta_{{\bf{x}},\mu}})
  \\&-\frac{1}{{\varepsilon}^{2}}e^{G(1+u_0+\tilde{W}_{\bf{x},\mu}+\eta_{{\bf{x}},\mu})}
 (1-e^{G(1+u_0+\tilde{W}_{\bf{x},\mu}+\eta_{{\bf{x}},\mu})})^2\Big\}  Y_{{\bf{x}},\mu,0}dy
\\&
=\int_{\Omega}O\Big(\frac{e^{3u_0+3\tilde{W}_{\bf{x},\mu}}}{{\varepsilon}^{2}}\Big) Y_{{\bf{x}},\mu,0}dy
 =O\Big(\frac{1}{\mu^5}\Big).\end{aligned}\end{equation}
In view of \eqref{1p11}-\eqref{estforu4}, we complete the proof of Proposition \ref{pppp}.
\end{proof}

\textbf{Completion of the proof of Theorem \ref{blmix}}:
In view of Proposition \ref{pp}, we can find $\eta_{{\bf{x}},\mu}$ satisfying \eqref{final}. To complete the proof of  Theorem \ref{blmix}, we need to find  $({\bf{x}},\mu)$ suitably  depending on $\varepsilon>0$ such that the corresponding $c_{0}$, $c_{ij}$ are zero in \eqref{final}. By using  Proposition \ref{pppp},
we see that the conditions in Lemma \ref{ppp} are equivalent to
\begin{equation}DG^*({\bf{x}})=O(\frac{\ln\mu}{\mu^{2-\frac{\alpha}{2}}}),\label{final1}\end{equation}and
\begin{equation}\begin{aligned}\label{final2}&\frac{8}{\rho_{1}\mu^{3}}\left(
{\displaystyle \sum \limits_{i=1}^{k}}
\rho_{i}\left(  \int_{\Omega_{i}\setminus B_{d_i}\left(  x_i\right)  }%
\frac{e^{f_{{\bf{x}},i}}-1}{\left \vert y-x_{i}\right \vert ^{4}}-\int_{\mathbb{R}^2\setminus
\Omega_{i}}\frac{1}{\left \vert y-x_{i}\right \vert ^{4}}\right)  \right)
+ B_0{\varepsilon}^{2}\mu \\&\
=\frac{1}{\mu^{3}}O\left(  \left \vert DG^{\ast}\left(  {\bf{x}}\right)  \right \vert
^{2}\ln \mu+\delta^{2}\right)  +O\left(  \frac{1}{\mu^{5}}\right).\end{aligned}\end{equation}
Since $D({\bf{q}})<0$, we can find a small $\delta>0$, such that for $\bf{x}$ close to $\bf{q}$, we have
\begin{equation*}\begin{aligned}
{\displaystyle \sum \limits_{i=1}^{k}}
\rho_{i}\left(  \int_{\Omega_{i}\setminus B_{d_i}\left(  x_i\right)  }%
\frac{e^{f_{{\bf{x}},i}}-1}{\left \vert y-x_{i}\right \vert ^{4}}-\int_{\mathbb{R}^2\setminus
\Omega_{i}}\frac{1}{\left \vert y-x_{i}\right \vert ^{4}}\right) +O(\delta^2)
<0.\end{aligned}\end{equation*}
Then we obtain a solution $({\bf{x}},\mu )=({\bf{x}}(\e),\mu(\e))$ of \eqref{final1}-\eqref{final2} satisfying
\begin{equation*}|DG^*({\bf{x}}(\e))|=O(\e^{1-\frac{\alpha}{4}}\ln\e), \ \ \mu(\e)\in\Big(\frac{\beta_0}{\sqrt{\e}},\frac{\beta_1}{\sqrt{\e}}\Big),\end{equation*}
which implies the existence of a solution $u_\e$ to \eqref{1}.
In view of  $e^{\tilde{W}_{\bf{x},\mu}}=O(\e)$ on $\Omega$, $u_\e=F(v_\e)$,    and  \begin{equation*}
\begin{aligned}
&u_\e(y)=1+u_0+\tilde{W}_{\bf{x},\mu}+\eta_{{\bf{x}},\mu},
\end{aligned}
\end{equation*}
we obtain that  $\lim_{\e\to0} \sup_{\Omega}v_{\e}=-\infty$.
Moreover, we remind that from \cite{LY1},
  \begin{equation}\label{est2}\begin{aligned}\int_{B_{d_i}(x_i)}e^{w_{{\bf{x}},\mu}^*+u_0}dx=\frac{8^{k-1}\rho_1}{\Pi_{i=2}^k\mu_i^2}\Big(8\pi+O(\frac{\ln\mu}{\mu^2})\Big),\end{aligned}\end{equation} \begin{equation}\label{est3}\begin{aligned} \int_{\Omega}e^{w_{{\bf{x}},\mu}^*+u_0}dx=\frac{8^{k-1}\rho_1}{\Pi_{i=2}^k\mu_i^2}\Big(8k\pi+O(\frac{\ln\mu}{\mu^2})\Big),
\end{aligned}\end{equation}
and \begin{equation}\label{est4}\begin{aligned}w_{{\bf{x}},\mu}^*(x)=\sum_{i=1}^k w_{x_i,\mu_i}^*(x)=-2k\ln\mu+O(1)\ \textrm{ on}\  \Omega\setminus[\cup_{i=1}^{k}B_\delta(x_i))\  \textrm{for any}\  \delta>0.\end{aligned}\end{equation}
Indeed,   the estimation \eqref{est2} was obtained in \cite[(2.9)]{LY1},   the estimation \eqref{est3} was obtained in \cite[(2.10)]{LY1}, and
the estimation \eqref{est4} was obtained in \cite[(2.12)]{LY1}.
Then we obtain that
\[\frac{e^{G(1+u_0+\tilde{W}_{\bf{x},\mu}+\eta_{{\bf{x}},\mu})}}{\int_{\Omega}e^{G(1+u_0+\tilde{W}_{\bf{x},\mu}+\eta_{{\bf{x}},\mu})}dx}=\frac{e^{G(u_{\e})}}{\int_{\Omega}e^{G(u_{\e})}dx
}=
\frac{e^{v_{\e}}}{\int_{\Omega}e^{v_{\e}}dx
}\rightarrow \frac{1}{k}\sum_{i=1}^{k}\delta_{q_{i}},
\]in the sense of
measure as  $\e\to0$.
At this point, we complete the proof of Theorem \ref{blmix}.\hfill$\square$

{\bf Acknowledgement}\\
The author wishes to thank the anonymous referees very much for careful reading
and valuable comments.

\end{document}